\newcommand{\mrev}[1]{\href{http://www.ams.org/mathscinet-getitem?mr=#1}{MR#1}}
\newcommand{\zbl}[1]{\href{http://www.emis.de/cgi-bin/MATH-item?#1}{Zbl #1}}
\newcommand{\oalpha}{\overline{\alpha}}
\newcommand{\oF}{\overline{F}}
\newcommand{\vx}{\vec{x}}
\newtheorem{thm}{Theorem}[section]
\newtheorem{coro}[thm]{Corollary}
\newtheorem{lem}[thm]{Lemma}
\newtheorem{prop}[thm]{Proposition}
\newtheorem{quest}[thm]{Question}
\newtheorem{lemma}[thm]{Lemma}
\theoremstyle{definition}
\newtheorem{defn}[thm]{Definition}
\newtheorem{eg}[thm]{Example}
\theoremstyle{remark}
\newtheorem{remk}[thm]{Remark}
\title[Affine mappings: shrinking targets and Diophantine approximation]{Affine mappings of translation surfaces:\\
shrinking targets and Diophantine approximation}
\author{Chris Judge}
\address{Indiana University, Bloomington, IN 47405}
\email{cjudge@iu.edu}
\author{Josh Southerland}
\address{University of Bristol, Bristol, England BS8 1UG}
\email{josh.southerland@bristol.ac.uk}
\keywords{Translation surfaces, Veech groups, affine diffeomorphisms, Diophantine approximation, shrinking targets, spectral gap}
\begin{document}

\begin{abstract}
Let $(X,\omega)$ be a translation surface whose Veech group $\Gamma$ is a lattice. We prove that the generic orbit of the group of affine homeomorphisms of $(X,\omega)$ can be used to approximate each point of $X$ with Diophantine precision. The proof utilizes an induced $SL_2(\Rbb)$-action on a fiber bundle $Y$ whose base is $SL_2(\Rbb)/\Gamma$ and whose fiber is $X$.
We observe that this bundle embeds as an $SL_2(\Rbb)$-orbit closure in the moduli space of once marked translation surfaces, and hence we may invoke the spectral gap results of Avila and Gou\"ezel \cite{Avila-Gouezel-13-small-eigenvalues} 
and a quantitative mean ergodic theorem 
for the $SL_2(\Rbb)$ action on the mean-zero, square-integrable functions on $Y$.
\end{abstract}

%\setstretch{1.25}

\maketitle

\section{Introduction}
\label{sec:intro}

Let $X$ be a closed, orientable, smooth surface, and let $\omega$ be a 
complex valued 1-form that is holomorphic with respect to some complex structure on $X$. Integration of $(i/2) \omega \wedge \overline{\omega}$ defines a finite measure $\mu_\omega$ on $X$. Integration of $|\omega|$ defines a notion of length that in turn defines a distance function $d_\omega$. A self-homeomorphism $\phi:X \to X$ 
is said to be {\it affine} with respect to $\omega$ iff 
\begin{equation}
\label{eqn:affine-re-im}
\begin{pmatrix}
\Re(\phi^*(\omega)) \\ 
\Im(\phi^*(\omega)) 
\end{pmatrix}
~
=~
\begin{pmatrix}
a & b \\ c & d
\end{pmatrix}
\begin{pmatrix}
\Re(\omega) \\ 
\Im(\omega) 
\end{pmatrix}
\end{equation}
for some $2 \times 2$ invertible real matrix. The matrix is uniquely determined 
by (\ref{eqn:affine-re-im}), and we will denote it by $D \phi$. Each
affine homeomorphism preserves the measure $\mu_\omega$, and because the 
measure is finite, one finds that $\det(D\phi)=\pm 1$. The map $\phi \mapsto D\phi$ defines a homomorphism from the group, $\Aff^+_\omega$, of orientation-preserving 
affine homeomorphisms into $SL_2(\Rbb)$. 

 In the present paper, we assume  that the image of $D$ is a lattice 
 subgroup of $SL_2(\Rbb)$. In this case, most elements in $\Aff^+_{\omega}$ are pseudo-Anosov diffeomorphisms and hence act ergodically---in fact, Bernoulli---on $(X, \mu_\omega)$ (see \S 10.6 in \cite{FLP79}). Consequently, the action of $\Aff^+_{\omega}$ is ergodic, and so we can ask questions about the quantitative density of the orbits of generic points.

For each $g \in SL_2(\Rbb)$, let 
$\|g\|:= \sqrt{\tr(g^tg)}$. To measure the quantitative density of orbits, 
for each $(x,y, \alpha) \in X \times X \times \Rbb$ we consider the
set
$$
H_{x,y,\alpha}~ 
:=~
\left\{
\phi \in \Aff_{\omega}^+~
:~ 
d_{\omega}(\phi (x), y)~
\leq~
\| D \phi \|^{-\alpha} \right\}.
$$
One can show that there exists a constant $\alpha_\omega$ so that 
for almost every $(x,y) \in X \times X$ the set $H_{x,y,\alpha}$
is infinite if $\alpha < \alpha_\omega$ and finite if $\alpha > \alpha_\omega$ 
(compare with \cite{Laurent-Nogueira-12} \cite{Kelmer-17} and see Proposition
\ref{prop:alpha-omega}.)

\begin{thm} 
\label{thm:full-measure}
Suppose that $D\Aff^+_\omega$ is a lattice in $SL_2(\Rbb)$. Then 
$0 < \alpha_\omega \leq 1$.
\end{thm}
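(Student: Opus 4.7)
\medskip
\noindent The plan is to establish the two inequalities $\alpha_\omega \leq 1$ and $\alpha_\omega > 0$ separately. The upper bound is a convergence-type Borel--Cantelli argument using only the lattice structure of $\Gamma := D\Aff^+_\omega$ and the invariance of $\mu_\omega$; the lower bound invokes the effective equidistribution of the $SL_2(\Rbb)$-action on the bundle $Y$ announced in the abstract.

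For the upper bound, fix $\alpha > 1$ and $y \in X$. Since each $\phi \in \Aff^+_\omega$ preserves $\mu_\omega$, the set $A_\phi(y) := \{x \in X : d_\omega(\phi(x),y) \leq \|D\phi\|^{-\alpha}\}$ has $\mu_\omega$-measure at most $C \|D\phi\|^{-2\alpha}$. The standard lattice point count gives $\#\{\gamma \in \Gamma : \|\gamma\| \leq R\} \leq c R^2$, and the kernel of $D : \Aff^+_\omega \to SL_2(\Rbb)$ is the finite group of translation automorphisms of $(X,\omega)$, so partitioning by the annuli $\|D\phi\| \in [2^k, 2^{k+1})$ yields
\[
\sum_{\phi \in \Aff^+_\omega} \mu_\omega(A_\phi(y)) ~\leq~ C' \sum_{k \geq 0} 2^{k} \cdot 2^{-2\alpha k} ~<~ \infty
\]
whenever $\alpha > 1$. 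Borel--Cantelli then shows $H_{x,y,\alpha}$ is finite for $\mu_\omega$-a.e.\ $x$, and Fubini extends this to $(\mu_\omega \otimes \mu_\omega)$-a.e.\ $(x,y)$. Hence $\alpha_\omega \leq 1$.

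For the lower bound, the strategy is to count $\phi \in \Aff^+_\omega$ with $\|D\phi\| \leq T$ and $\phi(x) \in B(y,r)$ using the induced $SL_2(\Rbb)$-action on $Y = SL_2(\Rbb) \times_\Gamma X$. The identification $(\gamma, x) \sim (e, \phi_\gamma(x))$ in $Y$ converts this into a count of intersections of the $SL_2(\Rbb)$-orbit of $(e,x)$ over the ball $B_T \subset SL_2(\Rbb)$ with a small neighborhood of $(e,y) \in Y$. Thickening this neighborhood by a bump of width $\eta$ in the base direction produces a smoothed indicator $F_{y,r,\eta}$ whose $B_T$-average is, up to combinatorial factors of $\eta^3$ and $\Vol(B_T)$, the count we want. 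The quantitative mean ergodic theorem for $L^2_0(Y)$ provided by the Avila--Gou\"ezel spectral gap then gives an inequality of the form
\[
\bigl\| A_T F_{y,r,\eta} - \tfrac{1}{\Vol(Y)}\textstyle\int F_{y,r,\eta}\,dm_Y\bigr\|_{L^2(Y)} ~\leq~ C\, \Vol(B_T)^{-\delta}\, \|F_{y,r,\eta}\|_{{\rm Sob}}
\]
for some $\delta > 0$ and a suitable Sobolev norm. Setting $r = T^{-\alpha}$ for sufficiently small $\alpha > 0$ and choosing $\eta$ as an appropriate power of $T$, the main term on the order of $T^2 r^2$ dominates the error. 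A dyadic Borel--Cantelli argument along $T_k = 2^k$, combined with Chebyshev applied to the $L^2$-bound above, upgrades this in-mean statement to the pointwise claim that $H_{x,y,\alpha}$ is infinite for a.e.\ $(x,y)$, yielding $\alpha_\omega \geq \alpha > 0$.

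The main obstacle is this last, quantitative step. The thickening parameter $\eta$ must be calibrated against $T$ so that the Sobolev norm of $F_{y,r,\eta}$ does not blow up faster than the gain $\Vol(B_T)^{-\delta}$, while still keeping the approximation to the true shrinking target accurate enough for the main-term count to survive unthickening, and moreover the exceptional sets at each dyadic scale $T_k$ must be summable to justify the Borel--Cantelli step. Navigating this trade-off between the Sobolev exponents, the spectral gap $\delta$, and the target rate $\alpha$ is where the delicate work will lie.
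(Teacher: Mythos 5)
Your upper bound is correct and is essentially the paper's argument: measure preservation gives $\mu_\omega(A_\phi(y)) = O(\|D\phi\|^{-2\alpha})$, the quadratic lattice count together with finiteness of $\ker D$ makes the series converge for $\alpha>1$, and the easy direction of Borel--Cantelli finishes. The lower bound, however, has a genuine gap in two places. First, you invoke a quantitative mean ergodic theorem for $L^2_0(Y)$ ``provided by the Avila--Gou\"ezel spectral gap'' without justifying why any such theorem applies to $Y$. The bundle $Y=(SL_2(\Rbb)\times X)/\Gamma$ is neither a homogeneous space nor a priori an object to which \cite{Avila-Gouezel-13-small-eigenvalues} speaks; the paper spends two full sections establishing exactly this point, by adding a marked (fake) zero, embedding $Y$ equivariantly as a closed $SL_2(\Rbb)$-invariant subset of a stratum $\Hcal^1(\alpha)$ of marked $1$-forms, and checking that the pushed-forward measure is an ergodic $SL_2(\Rbb)$-invariant affine measure so that the Avila--Gou\"ezel gap applies. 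Without some version of that argument, positivity of the gap --- and hence $\alpha_\omega>0$ --- is unproven.

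Second, the quantitative core of your lower bound is explicitly deferred (``where the delicate work will lie''), and the smoothing/Sobolev framework you set up makes that core harder than it needs to be. The paper avoids thickening entirely: it applies the $L^2\to L^2$ mean ergodic bound $\|A_tF-\oF\|\le\epsilon(t)\|F\|$ directly to the \emph{unsmoothed} indicator $F=\one_{S\times T}$ of a product target, where $S\subset SL_2(\Rbb)$ is a fixed compact set whose $\Gamma$-translates are disjoint, and uses the observation that $A_tF$ vanishes identically on the bad set $\Bcal_{S,T,t}=\{y:\ G_t\cdot y\cap\pi(S\times T)=\emptyset\}$, so that
\[
\Bigl(\tfrac{m(S)\,\mu(T)}{\nu(Y)}\Bigr)^{2}\,\nu(\Bcal_{S,T,t})~\le~\bigl\|A_tF-\oF\bigr\|^{2}~\le~\epsilon(t)^{2}\,m(S)\,\mu(T),
\]
whence $\nu(\Bcal_{S,T,t})\le \epsilon(t)^{2}\,\nu(Y)^{2}/(m(S)\,\mu(T))$ with no Sobolev norm and no thickening parameter to calibrate. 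Summing these bounds over targets $T_n$ of radius $n^{2}e^{(\sigma_\omega-1)n}$ feeds into Borel--Cantelli, and a separate duality lemma converts ``$G_{t}\cdot y$ meets $\pi(S\times T)$'' into ``some $\gamma\in\Gamma$ with $\rho(\gamma)\le t+C$ sends $x$ into $T$.'' If you insist on your route you must actually carry out the $\eta$-versus-$T$ calibration you flag as the main obstacle; the indicator-function argument above shows that obstacle can be bypassed altogether.
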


If $X$ is the torus $\Tbb^2:=\Cbb/(\Zbb + i \Zbb)$ marked at $0$ and $\omega= dz$,
then it follows from work of Ghosh, Gorodnik, and Nevo that $\alpha_\omega=1$ 
 (see Corollary 1.2 in \cite{GGN}). In the language of \cite{GGN}, the constant $\alpha_\omega$ is the `Diophantine approximation exponent', and if 
 $\alpha_\omega =1$, then one says
 that this exponent is optimal \cite{GGN}. 

 Using a different method, Finkelshtein \cite{Fink17} 
 recovered the above result of \cite{GGN} and extended it to the 
 action of thin subgroups of $\Aff^{+}_{dz} \cong SL_2(\Zbb)$ acting on the torus
 with a refinement of the quantitative estimates. 
 The second named author extended these estimates to `regular' `square-tiled surfaces'
 \cite{Sou24}. Recall that $(X, \omega)$ is square-tiled surface iff there exists 
 a branched covering $p: X \to \Tbb^2$ branched over $0$ so that $\omega=p^*(dz)$. A square-tiled surface is said to be `regular' if $p|_{X \setminus \omega^{-1}(0)}$ is a normal
 covering. The results in \cite{Sou24} imply that $\alpha_\omega =1$ if $(X,\omega)$ is
 a regular square tiled surface.

The proof of the upper bound in Theorem \ref{thm:full-measure} 
comes from an elementary argument involving the `easy' implication of the Borel-Cantelli Lemma. Our proof of the strict lower bound on $\alpha_\omega$ depends on showing that
a certain unitary representation of $G:=SL_2(\mathbb{R})$ has a spectral gap.
(Compare with, for example, \cite{GGN} \cite{Kelmer-17}.) 
To be precise, consider the lattice subgroup $\Aff^+_{\omega,0}$ 
consisting of affine homeomorphisms that act as the identity on $\omega^{-1}(0)$. This group acts on the right of $G$ by $(g, \phi) \mapsto g \cdot D\phi$  and on the right of $X$ by $(\phi,x) \mapsto \phi^{-1}(x)$. Hence 
$\Aff^+_{\omega,0}$ acts on the product $G \times X$, and 
we define $Y$ to be the quotient. The product of Haar measure on $G$ and $d \mu_\omega$ descends to a finite measure $\nu$ on $Y$, and the left action of $G$ on $G \times X$ 
given by $(g \cdot (h,x)) \mapsto (gh,x)$ descends
to a measure preserving left action of $G$ on $Y$. Therefore we have 
a unitary representation $\pi_\omega$ of $G=SL_2(\Rbb)$ on $L^2(Y,\nu)$ defined
by 
$$
(\pi_\omega(g) u)([h,x])~ :=~ u\left(g^{-1} \cdot [h,x] \right).
$$
 Let $\beta_\omega$ denote the infimum of the numbers $1-s$ such that the complementary series $\pi_s$ is a subrepresentation of $\pi_\omega$. 
 (See \S \ref{sec:AG} for the construction of $\pi_s$ that we use.) 
The constant $\beta_\omega$ is the {\it spectral gap} for $\pi_\omega$.

The lower bound in Theorem \ref{thm:full-measure} will follow from

\begin{thm}
\label{thm:alpha-beta}
We have $\alpha_\omega \geq \beta_\omega  > 0$. 
\end{thm}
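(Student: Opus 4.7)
The plan divides into two independent parts matching the two assertions: establishing a positive spectral gap, and converting that gap into a lower bound on the Diophantine exponent.

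For $\beta_\omega > 0$, the plan is to exhibit the bundle $Y$ as an $SL_2(\Rbb)$-orbit closure inside a stratum of once-marked translation surfaces, as the abstract already anticipates. The map $[g,x] \mapsto \bigl((X, g \cdot \omega), x\bigr)$, sending a class to the deformed translation surface with $x$ as its marked point, gives such an embedding; closedness of the image uses that $\Gamma = D\Aff^+_\omega$ is a lattice. By Eskin--Mirzakhani--Mohammadi such an orbit closure is an affine invariant submanifold, and by Avila--Gou\"ezel \cite{Avila-Gouezel-13-small-eigenvalues} the unitary representation of $SL_2(\Rbb)$ on the mean-zero $L^2$ functions on such a submanifold has a uniform spectral gap. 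Hence $\beta_\omega > 0$.

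For the bound $\alpha_\omega \geq \beta_\omega$, the strategy is to interpret the Diophantine counting function as an ergodic average on $Y$ and then apply a quantitative mean ergodic theorem driven by $\beta_\omega$. For each $y \in X$ and scales $\epsilon, \delta > 0$, define a Siegel-type test function
\[
F_{y,\epsilon,\delta}([g,z]) \;:=\; \sum_{\phi \in \Aff^+_{\omega,0}} \eta_\delta(g \cdot D\phi) \cdot \chi_{y,\epsilon}(\phi^{-1}(z)),
\]
where $\eta_\delta$ is an $L^1$-normalized bump at $e \in G$ and $\chi_{y,\epsilon}$ is an $L^1$-normalized bump supported in $B(y,\epsilon) \subset X$. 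The summand is invariant under $\phi_0 \cdot (g,z) = (g \cdot D\phi_0,\, \phi_0^{-1}(z))$, so $F_{y,\epsilon,\delta}$ descends to $Y$. Unfolding and interchanging sum and integral shows
\[
\int_{B_T} F_{y,\epsilon,\delta}\bigl(g \cdot [e,x]\bigr) \, dg \;\approx\; \mu_\omega(B(y,\epsilon))^{-1} \cdot \#\bigl\{\phi \in \Aff^+_{\omega,0} \,:\, \|D\phi\| \leq T,\ \phi^{-1}(x) \in B(y,\epsilon)\bigr\},
\]
so the average encodes the counting problem defining $H_{x,y,\alpha}$ (up to swapping $x$ and $y$, which is harmless since $\|D\phi\| = \|D\phi^{-1}\|$ in $SL_2(\Rbb)$). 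The quantitative mean ergodic theorem on $L^2(Y,\nu)$ then yields
\[
\int_{B_T} F_{y,\epsilon,\delta}\bigl(g \cdot [e,x]\bigr) \, dg \;=\; |B_T| \cdot \overline{F_{y,\epsilon,\delta}} \;+\; \mathcal E(T, [e,x]),
\]
with $\mathcal E$ controlled in $L^2(Y)$-norm by a multiple of $T^{2-\beta_\omega + o(1)} \|F_{y,\epsilon,\delta} - \overline{F_{y,\epsilon,\delta}}\|_{L^2(Y)}$, via polynomial decay of matrix coefficients of $\pi_\omega$ on $L^2_0(Y)$ together with $|B_T| \asymp T^2$. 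The main term has order $|B_T|\cdot\mu_\omega(B(y,\epsilon)) \sim T^2 \epsilon^2$. Choosing $\epsilon = T_n^{-\alpha}$ with $\alpha < \beta_\omega$ along a sequence $T_n$ for which the squared error bounds are Borel--Cantelli summable, and integrating in $(x,y)$, we conclude that the main term dominates the error for a.e.\ $(x,y)$ on infinitely many $n$. Hence $H_{x,y,\alpha}$ is infinite a.e., which gives $\alpha_\omega \geq \beta_\omega$.

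The main obstacle is the quantitative matching of exponents in the final step. The $L^2$ norm of $F_{y,\epsilon,\delta}$ blows up as $\epsilon \to 0$ (roughly like $\epsilon^{-1}\delta^{-3/2}$), and passing from an $L^2$ bound on $\mathcal E$ to a pointwise bound along shrinking targets must be done carefully so that the main term genuinely dominates the error with the \emph{full} exponent $\beta_\omega$ rather than a strict fraction thereof. The optimal choice of the smoothing scale in $\chi_{y,\epsilon}$, and of the subsequence $T_n$ along which Borel--Cantelli is applied, together constitute the delicate quantitative heart of the argument.
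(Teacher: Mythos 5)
Your proposal is correct and follows essentially the same route as the paper: embed $Y$ as an $SL_2(\Rbb)$-orbit closure in a stratum of once-marked $1$-forms, invoke Eskin--Mirzakhani(--Mohammadi) and Avila--Gou\"ezel for $\beta_\omega>0$, and convert the gap into the Diophantine lower bound via the quantitative mean ergodic theorem together with a Chebyshev/Borel--Cantelli argument over shrinking balls $B(y,\epsilon)$ with $\epsilon\asymp T_n^{-\alpha}$. The only packaging difference is that you unfold a smoothed Siegel-type transform, whereas the paper works with the indicator of a product set $S\times T$ pushed into a fundamental domain and bounds the measure of the set where the ergodic average \emph{vanishes identically}; this observation is exactly what resolves the ``delicate quantitative heart'' you flag, since one only needs Chebyshev at the scale of the mean $\overline{F}$ rather than pointwise domination of the error, and it yields $\nu(\Bcal_{S,T,t})\le \epsilon(t)^2\,\nu(Y)^2/(m(S)\mu(T))$, which is summable along $T_n=e^{n}$ precisely when $\alpha<\beta_\omega$. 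Two points you elide that the paper treats explicitly: (i) Avila--Gou\"ezel's theorem applies to \emph{ergodic} invariant measures on strata \emph{without} marked points, so one must both identify the pushforward of $m\times\mu_\omega$ with the affine measure of the orbit closure (done via Eskin--Mirzakhani and the Lebesgue-class argument) and justify the extension of their result to strata with a fake zero; (ii) at the end one must show that infinitely many of the maps $\phi_n$ produced are distinct, which the paper does using discreteness of $\Gamma_\omega$ and the fact that $y\notin\Aff^+_\omega\cdot x$ for a.e.\ $x$. Neither is a fatal gap, but both are genuine steps your sketch passes over.
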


In fact, we prove a slightly stronger statement. 
We show that for every $y$ and a.e. $x$ the set $H_{x,y,\alpha}$ is 
infinite if $\alpha < \beta_\omega$. 
See Theorem \ref{thm:diophantine}.

It is well-known that a representation of $SL_2(\Rbb)$ is tempered if and only if none of its  subrepresentations are isomorphic to a  complementary series.  
In particular, $\pi_\omega$ is tempered iff 
$\beta_\omega =1$. Therefore we have 

\begin{coro}
If $\pi$ is tempered, then the Diophantine approximation exponent is optimal.
\end{coro}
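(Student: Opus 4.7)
The corollary should follow immediately from the combination of Theorem \ref{thm:full-measure}, Theorem \ref{thm:alpha-beta}, and the standard characterization of tempered representations of $SL_2(\Rbb)$ cited in the paragraph preceding the statement. The plan is to convert the temperedness hypothesis into an equality for $\beta_\omega$ and then bracket $\alpha_\omega$ between $1$ from above and $\beta_\omega$ from below.

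First I would record the representation-theoretic fact already cited in the excerpt: a unitary representation of $G = SL_2(\Rbb)$ is tempered if and only if it has no complementary series $\pi_s$ as a subrepresentation. Given the definition of $\beta_\omega$ as the infimum of $1-s$ over those $s$ for which $\pi_s$ is a subrepresentation of $\pi_\omega$, and recalling that for complementary series one has $s \in (0,1)$, the hypothesis that $\pi_\omega$ is tempered translates to the assertion that this infimum is taken over the empty set. With the natural convention that the infimum is then $1$ (the supremum of the admissible range of $1-s$), the hypothesis of the corollary becomes the equality $\beta_\omega = 1$.

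Next I would apply Theorem \ref{thm:alpha-beta} to obtain $\alpha_\omega \geq \beta_\omega = 1$, and Theorem \ref{thm:full-measure} to obtain the matching upper bound $\alpha_\omega \leq 1$. Combining the two yields $\alpha_\omega = 1$, which is by definition the statement that the Diophantine approximation exponent is optimal in the sense of \cite{GGN}.

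The main obstacle in this corollary is not the deduction itself, which is a one-line bookkeeping exercise, but rather the two theorems it invokes: Theorem \ref{thm:alpha-beta} is where the real content lives, relying on the quantitative mean ergodic theorem for $\pi_\omega$ on $L^2(Y,\nu)$ together with the Avila--Gou\"ezel spectral gap estimates on the orbit closure in which $Y$ embeds. The only minor subtlety in the corollary itself is to make sure the convention for $\beta_\omega$ when $\pi_\omega$ contains no complementary series is indeed $\beta_\omega = 1$, so that ``tempered'' and ``$\beta_\omega = 1$'' are genuinely synonymous; once that is fixed, no further work is required.
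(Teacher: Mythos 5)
Your proposal is correct and follows exactly the paper's (implicit) argument: temperedness forces $\beta_\omega = 1$ via the characterization in terms of complementary series, and then Theorem \ref{thm:alpha-beta} together with the upper bound of Theorem \ref{thm:full-measure} pins $\alpha_\omega = 1$. Your remark about the empty-infimum convention for $\beta_\omega$ is the same reading the paper adopts when it asserts ``$\pi_\omega$ is tempered iff $\beta_\omega = 1$.''
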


 \begin{quest}
For which $(X,\omega)$ is $\pi_\omega$ tempered?
\end{quest}

In general, the spectral gap $\beta_\omega$ does not equal 1. Indeed,
the space $Y=(SL_2(\Rbb) \times X)/D\Aff_{\omega,0}^+$ is a bundle over $SL_2(\Rbb)/D\Aff_{\omega,0}^+,$ and the left action of $SL_2(\Rbb)$ commutes with the bundle projection.
Consequently, any irreducible complementary series representation of $L^2(SL_2(\Rbb)/D\Aff_{\omega,0}^+)$ will pullback to an irreducible complementary series representation in the bundle $L^2(Y)$. Ellenberg and McReynolds show that for any finite index subgroup $\Gamma \subset \Gamma(2)$ of the principal congruence group of level 2 that contains $-\mathrm{Id}$, there exists a square-tiled surface $(X, \omega)$ 
such that $D\Aff_{\omega}^+ = \Gamma$~\cite{ElMc12}~\cite{J13}.
The group $\Gamma(2)$ is isomorphic to a free group and so 
by abelianization and reduction modulo $n$, one obtains a surjective homomorphism 
$\Gamma(2) \mapsto \Zbb/n\Zbb$. For large enough $n$ the kernel has complementary 
series (see Appendix A of \cite{MWS13}). 
Moreover, Matheus and Weitze-Schmith\"usen have constructed explicit examples of $(X,\omega)$ such that $L^2(SL_2(\Rbb)/D\Aff_{\omega}^+)$ has irreducible complementary series representations with $s$ arbitrarily close to 1 \cite{MWS13}. 
Since each complementary series in $L^2(SL_2(\Rbb)/D \Aff_{\omega})$ lifts to 
a complementary series in $L^2(SL_2(\Rbb)/D \Aff_{\omega,0})$, the examples of 
Matheus and Weitze-Schmith\"usen have 
$\beta_\omega$ arbitrarily close to zero.

\begin{quest}
Does there exist $(X, \omega)$ so that $\alpha_\omega \neq \beta_\omega$? 
\end{quest}

Since $\alpha_\omega =1$ for regular square-tiled surfaces \cite{Sou24}, the answer 
is `yes' if one could construct a regular square-tiled surface $(X,\omega)$ so that
the natural representation of $SL_2(\Rbb)$ on $L^2(SL_2(\Rbb)/D \Aff_{\omega}^+)$ 
has a complementary series.

To prove Theorem \ref{thm:alpha-beta}, we 
adapt the method of Ghosh, Gorodnik, and Nevo \cite{GGN} (see also \cite{Kelmer-17}). We frame the Diophantine question in terms of a shrinking target problem and move this problem into the bundle $Y:= (SL_2(\mathbb{R}) \times X) /D \Aff_{\omega, 0}^+$.
The induced $SL_2(\mathbb{R})$-action encodes the action of the affine group on the fiber $X$ of the bundle $Y$ (see \S \ref{sec:aff-manifold}). We show that a solution to a shrinking target problem with respect to the $SL_2(\mathbb{R})$-action gives a solution to the shrinking target problem for the affine group action on the surface. A spectral gap 
allows us to use a quantitative mean ergodic theorem (see Appendix \ref{sec:MET})
in combination with a Borel-Cantelli argument to prove that $\alpha_\omega \geq \beta_\omega$.   

To show that $\beta_\omega > 0$, we embed
the total space of the bundle $Y$ as an $SL_2(\Rbb)$-orbit closure $\Mcal$ in the moduli space of marked translation surfaces. 
The embedding intertwines the measure-preserving $SL(2,\Rbb)$ action on $Y$ and the measure-preserving $SL_2(\Rbb)$ action on $\Mcal$. Consequently, we may apply the results concerning spectral gap in \cite{Avila-Gouezel-13-small-eigenvalues} which give $\beta_\omega >0$.

%%%%%%%%%%%%%%%%%%%%%%%%%%%%%%%

\subsection{The affine group}

The study of the affine group $\Aff^+_\omega$  has its origins in the work of Thurston  and Veech. Thurston \cite{Th88} constructed 
examples of affine groups generated by two multitwists. Using regular 
polygons Veech \cite{Veech89} constructed examples of translation surfaces 
so that $D\Aff^+_{\omega}$ is a lattice. Since that time many authors have discovered
many new examples of translation surfaces with this lattice property. We refer  
the reader to the recent survey of McMullen \cite{McMullen}.

If $P \subset \Cbb$ is a planar polygon whose vertex angles are rational multiples of $\pi$, then there exists a surface $(X, \omega)$ such that $(P,dz)$ is obtained by quotienting $(X, \omega)$ by a finite group of isometries  \cite{FK36} \cite{ZK75}.
If $D \Aff^+_\omega$ is a lattice, then the dynamics of $\Aff_{\omega}^+$ is intimately connected with the billiard flow on $P$. Indeed, Veech realized that if $D \Aff^+_\omega$ is a lattice, then $\Aff^+_\omega$ acts with finitely many orbits  on the set of closed trajectories (up to homotopy) of the  billiard flow lifted from $P$ \cite{Veech89}. He used this observation to obtain asymptotics on the lengths of closed billiard trajectories.

\subsection{Lattices and Homogeneous dynamics}
Lattice actions on homogeneous spaces have been extensively studied in part due to their close relationship with number theory and various counting problems.
The particular case of approximating points in the punctured plane $\Rbb^2\setminus \{0\}$ with orbits of $SL_2(\mathbb{Z})$ acting linearly on the punctured plane was studied by Ledrappier using dynamical methods~\cite{Le99}, and by Nogueira using number-theoretic methods~\cite{Nogueira-02}. 
Later, Laurent--Nogueira used number-theoretic methods to improve our understanding of approximation at particular points achieving the best possible rates for approximating rational vectors~\cite{Laurent-Nogueira-12}. Using dynamical methods, Maucourant and Weiss studied the action of cocompact lattices~\cite{MW12}. They also approximate rates for $SL_2(\mathbb{Z})$-orbits of vectors possessing a particular Diophantine conditions, but these rates are not as strong as those produced in~\cite{Laurent-Nogueira-12}. Later still, Kelmer applied the framework of Ghosh, Gorodnik, and Nevo to general lattices $\Gamma \subset SL_2(\mathbb{R})$ acting linearly on the plane~\cite{Kelmer-17}. His work yields the best possible rates whenever the lattices in question are tempered.

The dynamical methods used in this literature share a few common features. First, the use of a closed subgroup $H$. Ledrappier \cite{Le99} and Maucourant and Weiss \cite{MW12} take $H$ to be a unipotent subgroup. 
For any lattice $\Gamma \subset SL_2(\mathbb{R}),$ the action of this one-parameter subgroup $H$ on $SL_2(\mathbb{R})/\Gamma$ is dual to the action of the lattice $\Gamma$ on $H\setminus SL_2(\mathbb{R}) \cong \mathbb{R}^2\setminus \{0\}.$ The work is to translate quantitative information for the action of $H$ into quantitative information for the lattice action. Ghosh--Gorodnik--Nevo consider this duality with respect to any closed subgroup $H$ of an algebraic group $G$. Then a lattice $\Gamma \subset G$ acting on $H\setminus G$ is dual to the action of $H$ on $G/\Gamma.$ Their results apply to a variety of settings beyond approximation in the punctured plane and yield optimal Diophantine exponents when the lattice $\Gamma$ is tempered.

Second, at the core of the arguments is a ``shrinking target" framework. For the benefit of the reader, we describe a shrinking target problem in our setting. Fix a lattice surface $(X,\omega)$ with orientation preserving affine homeomorphisms $\Aff_{\omega}^+$ and pick any $y \in X$. For any $\phi \in \Aff_{\omega}^+,$ let $B_{r(\phi)}(y)$ denote the open ball with a radius $r$ that depends on the operator norm of the differential of $\phi.$ These are our targets for each respective $\phi.$ We can ask: does almost every $x \in X$ have the property that $\phi (x) \in B_{r(\phi)}(y)$ for infinitely many $\phi \in \mathrm{Aff}_{\omega}^+$? How fast can we decrease the radius (shrink the target) as the norm of the differential of $\phi$ increases before this no longer holds? 
Hill and Velani coined the term ``shrinking target" in their fundamental work on the subject~\cite{HiVe95}. The first appearance of the framework is generally agreed to be in the work of Philipp~\cite{Phi67}. The shrinking target framework is now ubiquitous in literature exhibiting dynamical methods to deduce Diophantine properties or logarithm laws.

\subsection{Organization} 
In Section \ref{sec:duality}, we review techniques for establishing Diophantine properties for lattice actions. In Section \ref{sec:1forms-and-affine-diffeos}, we define and review properties of affine diffeomorphisms and the closely associated Veech group. In Section \ref{sec:aff-manifold}, we construct the induced $SL_2(\mathbb{R})$-action. We observe that the resulting bundle is isomorphic to a closed $SL_2(\mathbb{R})$-invariant suborbifold, and moreover, the isomorphism is equivariant with respect to the $SL_2(\mathbb{R})$-action. In Section \ref{sec:AG}, we show that the closed $SL_2(\mathbb{R})$-invariant suborbifold is an affine invariant manifold, and that the results of Avila and Gou\"ezel apply to our bundle. In Section \ref{sec:shrinking}, we translate the spectral gap into an effective mean ergodic theorem for the $SL_2(\mathbb{R})$-action using a standard theorem that we cite in Appendix \ref{sec:MET}. We use the results of Section \ref{sec:duality} alongside further Borel-Cantelli arguments to establish the Diophantine properties cited in Theorem \ref{thm:diophantine}. 

\subsection{Funding} The work of C. J. was supported in part by the  Leverhulme Trust and the Simons Foundation. The work of J. S. was supported by the Heilbronn Institute for Mathematical Research.

\subsection{Acknowledgements}\label{acknowledgements}
Thanks to Jon Chaika, Alex Furman, Vaibhav Gadre, Dami Lee, and Carlos Matheus for helpful discussions. Thanks to Alex Gorodnik for providing the reference \cite{Ananetal}. The second author thanks Jayadev Athreya for introducing him to shrinking target problems.  We especially thank Curt McMullen for his very helpful 
comments concerning the introduction.\\

%%%%%%%%%%%%%%%%%%%%%%%%%%%%%%%%%%%%%%%%%%%%%%

\ \\

\section{Diophantine properties for lattice actions}
\label{sec:duality}

\medskip

Let $G$ be a locally compact, Hausdorff group.  
Let $m$ denote a Haar measure on $G$.
Let $\Gamma \subset G$ be a lattice, that is, a discrete subgroup so that $m(G/\Gamma) < \infty$.
Let $X$ be a space equipped with a finite measure $\mu$.
We suppose that $\Gamma$ acts on the left of $X$ so that $\gamma_*(\mu)=\mu$ for each $\gamma \in \Gamma$.

Let us describe the shrinking target problem in this context.
Roughly speaking, we are given a `target' $T \subset X$ with $\mu(T) >0$.
If $\Gamma$ acts $\mu$-ergodically on $X$, then for a.e. $x$ there exists $\gamma$ such that $\gamma \cdot x \in T$. 
The shrinking target problem asks: How does the `size' of $\gamma$ depend on $\mu(T)$ as $\mu(T) \searrow 0$. 

Here we will measure the size of $\gamma$ using a function $\rho$ that is compatible with the group structure.
Namely, we suppose that there exists a continuous 
function $\rho:G \to [0,\infty)$ and a constant $C_\rho$ so that
$\rho(g \cdot h) \leq \rho(g) + \rho(h) +C_\rho$ 
and $\rho(g^{-1}) \leq \rho(g) + C_\rho$.

We study the $\Gamma$-action on $X$ by studying a $G$-action on a `fiber bundle' $Y$ over $G$ with fiber $X$.  
We define the space $Y$ to be the quotient $(G \times X)/\Gamma$ where $\Gamma$ acts on the right of $G \times X$ by 
$$
(g,x)\cdot \gamma~
:=~
(g \cdot \gamma, \gamma^{-1}\cdot x).
$$
The group $G$ acts on the left of $Y$ by 
$$
g (h,x) \Gamma~ 
:=~ 
(g\cdot h, x) \Gamma.
$$
Let $\Fcal$ be a measurable fundamental domain for the 
$\Gamma$-action on $G \times X$. 
Define $\nu$ to be the measure $\pi_*(m \times \mu)$ where $\pi: \Fcal \to Y$ is the quotient map.
The $G$-action on $Y$ preserves the measure $\nu$.

To understand the shrinking target problem for the action of 
$\Gamma$ on $X$, we consider a related shrinking target problem for the 
action of $G$ on $Y$. In particular, we choose an auxilliary compact 
set $S \subset G$ with $\mu(S)>0$, and we ask: ``In order for $g \cdot y$ to lie in $\pi(S \times T)$, how `large' should $g$ be?''
The following proposition shows that an estimate for $G$ acting on $Y$ 
determines an estimate for $\Gamma$ acting on $X$.

Define  $G_t:= \rho^{-1}[0,t]$ for $t \in [0, \infty)$.

\begin{prop}
\label{prop:good-project-good}
\ 
Let $y= (h, x) \Gamma \in Y$.  
If $G_t \cdot y\,  \cap\, \pi(S \times T) \neq \emptyset$, then 
$\Gamma_{t+C_h}\cdot  x \cap T \neq \emptyset$ where
$C_h = \rho(h) + \sup_{k \in S} \rho(k) + 4C_\rho$.
\end{prop}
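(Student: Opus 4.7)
The plan is to unpack the definition of the bundle $Y$ so that the hypothesis produces an element of $\Gamma$ that carries $x$ into $T$, and then to control its $\rho$-size. Suppose $G_t \cdot y \cap \pi(S \times T) \neq \emptyset$. I would choose $g \in G_t$, $k \in S$, and $x' \in T$ so that $g \cdot y$ and the class of $(k,x')$ represent the same point of $Y$. By construction $g \cdot y = [gh, x]$, so, unpacking the right $\Gamma$-action $(g,x) \cdot \gamma = (g\gamma,\, \gamma^{-1} x)$ on $G \times X$, this identity amounts to the existence of $\gamma \in \Gamma$ satisfying the two equations $gh\gamma = k$ in $G$ and $\gamma^{-1} x = x'$ in $X$.

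Reading the second equation as $\gamma^{-1} \cdot x \in T$, the element $\gamma' := \gamma^{-1} = k^{-1} g h$ of $\Gamma$ already carries $x$ into $T$, and it remains to check that $\gamma' \in \Gamma_{t + C_h}$. Applying the quasi-subadditivity $\rho(g_1 g_2) \leq \rho(g_1) + \rho(g_2) + C_\rho$ twice to the product $k^{-1} g h$ and then the inverse estimate $\rho(k^{-1}) \leq \rho(k) + C_\rho$, I would bound
\[
\rho(\gamma') \leq \rho(k^{-1}) + \rho(g) + \rho(h) + 2 C_\rho \leq \rho(k) + \rho(g) + \rho(h) + 3 C_\rho.
\]
Since $g \in G_t$ and $k \in S$, the right-hand side is at most $t + \rho(h) + \sup_{k \in S} \rho(k) + 3 C_\rho \leq t + C_h$, which gives $\gamma' \in \Gamma_{t + C_h}$ with $\gamma' \cdot x \in T$, proving the proposition.

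The argument is essentially a direct unwinding of definitions, so there is no serious obstacle. The one place to be careful is convention: because the right $\Gamma$-action twists the $X$-coordinate by $\gamma^{-1}$, the element of $\Gamma$ that actually moves $x$ into $T$ is $\gamma^{-1}$, not $\gamma$, and it arises naturally as the product $k^{-1} g h$ in exactly the order needed for a clean three-term $\rho$-estimate. The constant $4C_\rho$ in the statement simply provides some slack over the sharper $3C_\rho$ obtained above.
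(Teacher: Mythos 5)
Your proposal is correct and follows essentially the same route as the paper: unwind the quotient to get $\gamma \in \Gamma$ with $gh\gamma \in S$ and $\gamma^{-1}x \in T$, then apply quasi-subadditivity and the inverse estimate to $\gamma^{-1} = s^{-1}gh$. The only (cosmetic) difference is that you bound $\rho(\gamma^{-1})$ directly, which is the element that actually moves $x$ into $T$ and yields the slightly sharper constant $3C_\rho$, whereas the paper's displayed chain bounds $\rho(\gamma)$ and so spends the extra $C_\rho$.
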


\begin{proof}
If $G_t \cdot (h,x) \Gamma \cap \pi(S \times T) \neq \emptyset$, then there exists $g \in G_t$ and $\gamma \in \Gamma$ 
such that 
$$
(gh,x) \gamma~
\in~
S \times T.
$$
Thus, $gh \gamma \in S$ and $\gamma^{-1} x \in T$.
In particular, $\gamma^{-1} = s^{-1} gh$ for some $s \in S$. 
Thus using the properties of $\rho$ we find that 
\begin{eqnarray*}
\rho(\gamma)~
&\leq&
\rho(\gamma^{-1})\, +\, C_\rho
\\
&\leq&
\rho(s)\, +\, \rho(g)\, +\, \rho(h)\, +\, 4C_\rho
\\
&\leq&
t\, +\, \rho(h)\, +\, \sup \{\rho(s)\, :\, s \in S\}\, +\, 4C_\rho.
\end{eqnarray*}
The claim follows. 
\end{proof}

Let $\Bcal_{S,T,t}$ denote the set of 
$y\in Y$ 
such that $G_t \cdot y$ does not intersect $\pi(S \times T)$.

\begin{lem}[Borel-Cantelli argument]
\label{lem:bc-duality}
Let $t_n$ be a sequence of positive numbers.
Suppose that there exists a sequence of sets $T_n$ so that 
$
   \sum_n \nu( \Bcal_{S,T_n,t_n})
  ~
  <
  \infty.
$
For a.e. $x \in X$ there exists $n_x$ so that if $n\geq n_x$, then 
there exists $\gamma \in \Gamma$ so that 
\begin{enumerate}
\item $\gamma \cdot x \in T_{n}$.

\item  $\rho(\gamma)\, \leq\,  t_n\, +\, 2 \sup _{s \in S} \rho(s)\, +\, 4C_\rho$
\end{enumerate}
\end{lem}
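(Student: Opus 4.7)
My plan is to reduce the statement to a double application of Borel--Cantelli---once on $(Y,\nu)$ and once on $(X,\mu)$---with Proposition~\ref{prop:good-project-good} converting the bundle-level conclusion into the required $\gamma \in \Gamma$.

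Write $A_n := \Bcal_{S,T_n,t_n}$; the hypothesis is $\sum_n \nu(A_n) < \infty$. To bridge $Y$ and $X$ I would first select a Borel set $S_0 \subseteq S$ of positive Haar measure that lies inside some fundamental domain for the right $\Gamma$-action on $G$, so that $\pi$ is injective on $S_0 \times X$ and $\pi_*(m\times\mu)$ agrees with $\nu$ on this slice. Such an $S_0$ is produced by fixing any measurable fundamental domain $\Fcal_G \subset G$, partitioning $S = \bigsqcup_{\gamma \in \Gamma}(S \cap \Fcal_G\gamma)$ disjointly, choosing $\gamma_0$ with $m(S \cap \Fcal_G\gamma_0) > 0$, and taking $S_0 := S \cap \Fcal_G\gamma_0$, which then lies in the fundamental domain $\Fcal_G\gamma_0$. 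This construction is the only non-cosmetic step in the argument.

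Next I would introduce the Fubini-friendly observable $F_n(x) := m(\{h \in S_0 : (h,x)\Gamma \in A_n\})$. Tonelli and the injectivity of $\pi$ on $S_0 \times X$ give
$$
\int_X F_n(x)\,d\mu(x) \;=\; (m \times \mu)\bigl(\pi^{-1}(A_n) \cap (S_0 \times X)\bigr) \;\leq\; \nu(A_n),
$$
so $\sum_n \int F_n\,d\mu < \infty$, hence $\sum_n F_n(x) < \infty$ and in particular $F_n(x) \to 0$ for $\mu$-a.e. $x$. For each such $x$ there is $n_x$ with $F_n(x) < m(S_0)$ whenever $n \geq n_x$; the set $\{h \in S_0 : (h,x)\Gamma \notin A_n\}$ then has positive Haar measure and thus contains some $h = h_n(x) \in S_0 \subseteq S$.

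For this $h$, the definition of $A_n$ forces $G_{t_n}\cdot(h,x)\Gamma$ to meet $\pi(S \times T_n)$, and Proposition~\ref{prop:good-project-good} applied to $y = (h,x)\Gamma$ produces $\gamma \in \Gamma$ with $\gamma \cdot x \in T_n$ and $\rho(\gamma) \leq t_n + C_h$. Since $h \in S$ gives $\rho(h) \leq \sup_{s\in S}\rho(s)$, we have $C_h \leq 2\sup_{s\in S}\rho(s) + 4C_\rho$, which is precisely the bound~(2) in the statement. The main obstacle is manufacturing the slice $S_0$ so that the Tonelli comparison is legitimate; once $S_0$ is in hand, the rest of the argument assembles mechanically from Tonelli, Borel--Cantelli, and Proposition~\ref{prop:good-project-good}.
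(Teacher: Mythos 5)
Your proof is correct and follows essentially the same route as the paper: carve out a positive-measure slice of $S$ sitting inside a fundamental domain, use Fubini/Tonelli to transfer the $\nu$-summability of the bad sets to a $\mu$-a.e.\ statement on $X$, and finish with Proposition~\ref{prop:good-project-good}, whose constant $C_h$ is bounded uniformly over $h \in S$. The only cosmetic differences are that you apply Tonelli before the Borel--Cantelli step (rather than forming the limsup set on $Y$ first) and that your witness $h$ is allowed to depend on $n$, neither of which affects the conclusion.
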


\begin{proof}
Borel-Cantelli implies that $\nu(E)=0$ where
$$
E~
=~
\bigcap_{j=1}^{\infty}
\bigcup_{n=j}^{\infty}
\Bcal_{S,T_n,t_n}.
$$
Let $\Fcal$ be a fundamental domain for the action of $\Gamma$ on $G$ so that $m(\Fcal \cap S) >0$.
Let $\tE$ be the subset of $\Fcal \times X$ 
so that $(h,x) \mapsto (h,x) \Gamma$ maps $\tE$ onto $E$.
Then $(m \times \mu)(\tE)= \nu(E)=0$.
Let $F$ be the set of $x\in X$ such that $m(\{h  \in \Fcal\,:\, (h,x) \in \tE \})>0$.
Fubini's theorem implies that $\mu(F)=0$.  

Suppose that $x \notin F$, and so $m(\{h  \in \Fcal\,:\, (h,x) \in \tE \})=0$.
Since $m(\Fcal \cap S)>0$, there exists $h \in \Fcal \cap S$ so that $(h,x) \notin \tE$.
In other words, 
$$
(h,x) \Gamma~
\in~
Y \setminus E~
=~
\bigcup_{j=1}^{\infty} \bigcap_{n=j}^{\infty} Y \setminus \Bcal_{S,T_n,t_n}. 
$$
Hence there exists $n_x$ such that 
$$
(h,x) \Gamma~ 
\in~
\bigcap_{n=n_x}^{\infty} Y \setminus \Bcal_{S,T_n,t_n}.
$$
Therefore, $y =(h,x) \Gamma \in  Y \setminus \Bcal_{S,T_n,t_n}$
for each $n \geq n_x$.
In other words, for $n \geq n_x$, we have
$G_{t_n} \cdot y\, \cap\,  \pi(S \times T_n) \neq \emptyset$.

Proposition \ref{prop:good-project-good} then implies that 
for $n \geq n_x$ we have $\Gamma_{t_n+C} \cdot x\, \cap\, T_n \neq \emptyset$
where $C= 2\sup_{s \in S} \rho(s)+ 4C_\rho$. The claim follows.
\end{proof}

Recall that $G_t = \rho^{-1}[0,t]$.
Let $u: G \to \Cbb$ be measurable.
Define the $G_t$ ergodic average of $u$ to be
$$ 
(A_t u)(y)~
:=~
\frac{1}{m(G_t)} \int_{G_t}
u(g^{-1}\cdot y)\, dm(g).
$$
Define the space average of $u$ to be 
$$
\overline{u}~
:=~
\frac{1}{\nu(Y)}\int_{Y} u\, d \nu.
$$
Define the {\em $L^2$ ergodic error function} by
\begin{equation}
\label{eqn:mean-ergodic}
\epsilon(t)~
:=~
\sup_{u \in L^2(Y, d\nu)}
\frac{
\left\|
A_t u- \ou 
\right\|}{\|u\|}
\end{equation}
where $\| \cdot \|$ is the $L^2(Y, d \nu)$ norm.

\begin{lem}
\label{lem:measure-of-bad-1} 
\ 
Suppose that $S \subset G$ is compact, $m(S)>0$, and $\gamma(S) \cap S =\emptyset$ for each 
$\gamma \in \Gamma \setminus \{ {\rm id} \}$.
Then 
$$
\nu( \Bcal_{S,T,t})~
\leq~
\epsilon(t)^2 \cdot \frac{\nu(Y)^2}{m(S)\, \mu(T)}.
$$
\end{lem}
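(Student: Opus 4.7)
The plan is to test the definition of the $L^2$ ergodic error $\epsilon(t)$ on the indicator function $u := \mathbf{1}_{\pi(S \times T)}$ and to combine a pointwise vanishing of $A_t u$ on the bad set $\Bcal_{S,T,t}$ with a Chebyshev-style inequality.

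First I would use the disjointness hypothesis $\gamma(S) \cap S = \emptyset$ for $\gamma \in \Gamma \setminus \{e\}$ to see that $\pi$ restricts to an injection $S \times T \to Y$. Indeed, if $(s_1,x_1)$ and $(s_2,x_2)$ with $s_i \in S$ lie in a common $\Gamma$-orbit, then $s_1 \gamma = s_2$ for some $\gamma \in \Gamma$, forcing $\gamma = e$ by hypothesis, hence $(s_1,x_1) = (s_2,x_2)$. Consequently $\nu(\pi(S \times T)) = (m \times \mu)(S \times T) = m(S)\mu(T)$, so that
$$\|u\|^2 = m(S)\,\mu(T), \qquad \ou = \frac{m(S)\,\mu(T)}{\nu(Y)}.$$

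Next I would show that $(A_t u)(y) = 0$ for every $y \in \Bcal_{S,T,t}$. Unpacking the definition,
$$(A_t u)(y) = \frac{1}{m(G_t)}\, m\bigl(\{g \in G_t : g^{-1} \cdot y \in \pi(S \times T)\}\bigr).$$
Since $m$ is Haar (and $G$ is unimodular in the intended application), the change of variables $h = g^{-1}$ converts this to an integral over $G_t^{-1}$. Using that $\rho$ is symmetric in the application (for instance $\rho(g) = \log\|g\|$ on $SL_2(\Rbb)$ satisfies $\rho(g^{-1}) = \rho(g)$ exactly), one has $G_t^{-1} = G_t$, and the defining property $G_t \cdot y \cap \pi(S \times T) = \emptyset$ forces the integrand to vanish identically. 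Consequently, on $\Bcal_{S,T,t}$ we have $|A_t u - \ou| = \ou$, hence
$$\ou^{\,2} \cdot \nu(\Bcal_{S,T,t}) \leq \int_{\Bcal_{S,T,t}} (A_t u - \ou)^2\, d\nu \leq \|A_t u - \ou\|^2.$$

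Finally, combining this with the upper bound $\|A_t u - \ou\|^2 \leq \epsilon(t)^2 \|u\|^2 = \epsilon(t)^2\, m(S)\,\mu(T)$ coming from the definition of $\epsilon(t)$, and substituting $\ou = m(S)\mu(T)/\nu(Y)$, I arrive at
$$\nu(\Bcal_{S,T,t}) \leq \frac{\epsilon(t)^2\, m(S)\,\mu(T)}{\ou^{\,2}} = \frac{\epsilon(t)^2\, \nu(Y)^2}{m(S)\,\mu(T)},$$
which is the desired inequality. The main subtlety I anticipate is the step aligning the direction of the ergodic average in the definition of $A_t$ (which involves $g^{-1}\cdot y$) with the direction of the orbit used to define $\Bcal_{S,T,t}$ (which involves $g \cdot y$). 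This is precisely the point where one must appeal to symmetry of $G_t$ beyond the abstract axioms on $\rho$; if $\rho$ is only approximately symmetric, the cleanest fix is to absorb the constant $C_\rho$ into $t$, which is harmless for the subsequent Borel--Cantelli application.
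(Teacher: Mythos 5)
Your proposal is correct and follows essentially the same route as the paper: both test the ergodic error on the indicator function of $\pi(S\times T)$ (the paper's $F=\one_{S\times T}\circ(\pi|_{\Fcal})^{-1}$ is exactly your $u$), observe that $A_tF$ vanishes on $\Bcal_{S,T,t}$, and conclude by a Chebyshev-type comparison using $F^2=F$. Your explicit attention to the $g$ versus $g^{-1}$ orientation of the average (resolved by $G_t=G_t^{-1}$ in the $SL_2(\Rbb)$ application, or by absorbing $C_\rho$ into $t$ in general) is a point the paper's proof passes over silently, and is a welcome clarification rather than a deviation.
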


\begin{proof}
Because the $\Gamma$ translates of $S$ are (pairwise) disjoint, the $\Gamma$ translates of $S \times T$ are disjoint.
Hence there exists a measurable set $\Fcal \subset G \times X$ containing $S \times T$ so that $\pi|_{\Fcal}: \Fcal \to Y$ 
is a measurable isomorphism.
If we define $F: Y \to \Rbb$ by $F:=\one_{S \times T} \circ (\pi|_{\Fcal})^{-1}$, then 
\begin{equation}
\label{eqn:period-integral}
\int_{Y} F\, d\nu~
 =~
 m(S) \cdot \mu(T).
\end{equation}

Consider the ergodic average $A_t F$. 
Note that the support of $A_t F$ is disjoint from $\Bcal_{S,T,t}$.
Indeed, suppose that $y=(h,x) \Gamma \in \Bcal_{S,T,t}$.
Then, by definition, then $g(h,x) \Gamma$ is not in $\pi(S \times T)$ for each $g \in G_t$.
Hence $F(g(h,x) \Gamma)=0$ for each $g \in G_t$, and so $(A_t F)(y)=0$.

Therefore, using  (\ref{eqn:period-integral}), we find that
\begin{eqnarray}
\label{eqn:upper}
\left\|
A_t F
-\,
\oF
\right\|^2
&\geq&
\int_{\Bcal_{S,T,t}}
\left|
A_t F(y)\,
-\, 
\oF
\right|^2 
d \nu(y)
\nonumber
\\
&=&
\left|
\frac{1}{\nu(Y)} \int_{Y} F
\right|^2 
\int_{\Bcal_{S,T,t}} 1\, d\nu
\\
&=&
\frac{1}{\nu(Y)^2}
(m(S)
\cdot
\mu(T))^2
\cdot
\nu(\Bcal_{S,T,t}).
\nonumber
\end{eqnarray}

Note that $F^2=F$, and hence, using 
(\ref{eqn:mean-ergodic}) and (\ref{eqn:period-integral}), we find that
\begin{eqnarray*}
\label{eqn:me-applied}
\left\| 
A_t F~
-~
\oF 
\right\|^2
&\leq&
\epsilon(t)^2
\cdot 
\int_{Y} F^2(y)~ d \nu(y). 
\\
&=&
\epsilon(t)^2 
\cdot 
m(S) \cdot \mu(T).
\nonumber
\end{eqnarray*}

By combining this with (\ref{eqn:upper}), we obtain the claim.
\end{proof}

\begin{prop}
\label{prop:abstract-target}
Let $t_n$ be a sequence of positive numbers.
Let $T_n \subset X$ be a sequence of measurable subsets so that
$
\sum_n \epsilon(t_n)^2 / \mu(T_n)
$
is finite.
Then for almost every $x \in X$ there exists $n_x$ 
such that if $n\geq n_x$, then 
there exists $\gamma \in \Gamma$ so that 
\begin{enumerate}

\item  $\gamma \cdot x \in  T_n$, and 

\item   $\rho(\gamma)~  \leq~   t_n\, +\, 2 \sup_{s \in S} \rho(s)\, +\, 4C_\rho$.
\end{enumerate}
\end{prop}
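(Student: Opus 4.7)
The plan is to deduce Proposition \ref{prop:abstract-target} by choosing an appropriate auxiliary compact set $S$ and then combining Lemma \ref{lem:measure-of-bad-1} (which bounds the measure of the bad set $\Bcal_{S,T,t}$ in terms of the ergodic error function $\epsilon(t)$) with Lemma \ref{lem:bc-duality} (which converts summability of the bad-set measures into almost sure conclusions about the lattice orbits).

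First, I would construct a compact set $S \subset G$ of positive Haar measure whose $\Gamma$-translates are pairwise disjoint, which is exactly the hypothesis required to invoke Lemma \ref{lem:measure-of-bad-1}. Since $\Gamma$ is discrete in the locally compact Hausdorff group $G$, there exists an open symmetric neighborhood $U$ of the identity so that $U U^{-1} \cap \Gamma = \{\mathrm{id}\}$; taking $S$ to be any compact subset of $U$ with $m(S) > 0$ ensures that $\gamma S \cap S = \emptyset$ for every $\gamma \in \Gamma \setminus \{\mathrm{id}\}$.

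Next, for each $n$ I would apply Lemma \ref{lem:measure-of-bad-1} to the pair $(T_n, t_n)$ to obtain
\[
\nu(\Bcal_{S,T_n,t_n})~
\leq~
\frac{\nu(Y)^2}{m(S)}\cdot
\frac{\epsilon(t_n)^2}{\mu(T_n)}.
\]
Since $m(S)$ and $\nu(Y)$ are fixed positive constants depending only on the data $(G,\Gamma,X,\mu,S)$, the hypothesis that $\sum_n \epsilon(t_n)^2/\mu(T_n)$ is finite immediately forces $\sum_n \nu(\Bcal_{S,T_n,t_n}) < \infty$.

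Finally, I would feed this summability into Lemma \ref{lem:bc-duality}. That lemma yields, for $\mu$-almost every $x \in X$, an index $n_x$ with the property that for each $n \geq n_x$ there exists $\gamma \in \Gamma$ satisfying $\gamma \cdot x \in T_n$ and $\rho(\gamma) \leq t_n + 2\sup_{s \in S}\rho(s) + 4C_\rho$, which matches the conclusion verbatim. The proof is therefore essentially assembly: no new analytic input is required beyond the two lemmas. The only nontrivial point is the construction of the injective neighborhood $S$, and even this is standard for discrete subgroups of locally compact Hausdorff groups, so I do not expect any real obstacle.
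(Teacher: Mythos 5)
Your proposal is correct and follows exactly the paper's route: the paper's own proof is the one-line instruction ``Combine Lemma \ref{lem:bc-duality} and Lemma \ref{lem:measure-of-bad-1},'' which is precisely the assembly you carry out. Your explicit construction of the compact set $S$ with pairwise disjoint $\Gamma$-translates (via an injective neighborhood of the identity, using discreteness of $\Gamma$) is a detail the paper leaves implicit, and it is a welcome addition rather than a deviation.
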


\begin{proof}
Combine Lemma \ref{lem:bc-duality} and Lemma  \ref{lem:measure-of-bad-1}.
\end{proof}

\begin{remk} One can give similar estimates for ``shells" in $\Gamma.$ Fix a constant $C>0$ and for any $t > C$ define $G_{t,C} = G_t\setminus G_{t-C}.$ Define the shells to be $\Gamma_{t,C} := \Gamma \cap G_{t,C}.$ A shell version of Proposition \ref{prop:good-project-good} follows by deducing the lower bound $\rho(\gamma) \geq t - C - \rho(h) - \sup_{s \in S} \rho(s) - 4 C_{\rho}.$ Consequently, a shell version of Proposition \ref{prop:abstract-target} which includes the lower bound $\rho(\gamma) \geq t_n - C - 2\sup_{s \in S} - 4 \rho_C$ follows mutatis mutandis. 
\end{remk}

We end this section by providing the argument used to define the constant $\alpha_\omega$
used in Theorem \ref{thm:full-measure}. Though the argument appears to be well-known---see, for example,  \cite{Kelmer-17}---we provide it for the uninitiated reader.

Let $\Gamma$ be a subgroup of the group of measure preserving transformations
of $(X, \mu)$. Suppose that there exists a function 
$\rho:\Gamma \to [0, \infty)$ and a constant $C_\rho$ so that 
$\rho(\gamma \cdot \gamma')\leq  \rho(\gamma) + \rho(\gamma') + C_\rho$.
Let $d: X \times X \to \Rbb$ be a distance function on $X$. 
Define 
$$
H_{x,y,\alpha}~ 
:=~
\left\{
\gamma \in \Gamma~
:~ 
d(\gamma (x), y)~
\leq~
e^{-\alpha \rho(g)} \right\}.
$$ 
Define $\oalpha: X \times X \to \Rbb$ by setting $\oalpha(x,y)$ equal to 
the supremum of $\alpha$  so that the set $H_{x,y, \alpha}$ has infinite cardinality.

\begin{prop}
\label{prop:alpha-omega} 
Suppose that each sublevel set of $\rho$ is finite. 
Suppose that $\Gamma$ acts ergodically on $(X,\mu)$ and that each $\gamma \in \Gamma$ is Lipschitz with respect to $d$. Then there exists a constant 
$\alpha^*$ so that for $\mu \otimes \mu$ almost every $(x,y)$ we have 
$\oalpha(x,y)=\alpha^*$.
\end{prop}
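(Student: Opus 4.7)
The plan is to present $\overline{\alpha}(x,y)$ as a $\limsup$, exhibit it as separately $\Gamma$-invariant in each coordinate, and then apply ergodicity twice via Fubini.

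Since the sublevel sets of $\rho$ are finite, any enumeration $\{\gamma_n\}$ of $\Gamma$ satisfies $\rho(\gamma_n) \to \infty$. Thus $H_{x,y,\alpha}$ is infinite precisely when there are infinitely many $\gamma \in \Gamma$ satisfying $-\log d(\gamma x, y) \geq \alpha\, \rho(\gamma)$, and so
\[
\overline{\alpha}(x,y) \;=\; \limsup_{\rho(\gamma)\to\infty} \frac{-\log d(\gamma x, y)}{\rho(\gamma)}.
\]
This presentation makes $\overline{\alpha}$ jointly measurable as a countable $\limsup$ of measurable functions.

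Next I would show $\overline{\alpha}$ is $\Gamma$-invariant in each coordinate. Applying the subadditivity $\rho(\delta\delta') \leq \rho(\delta) + \rho(\delta') + C_\rho$ to $(\gamma\gamma_0)\cdot\gamma_0^{-1}$ and to $\gamma \cdot \gamma_0$ yields
\[
|\rho(\gamma\gamma_0) - \rho(\gamma)| \;\leq\; \max\{\rho(\gamma_0),\rho(\gamma_0^{-1})\} + 2C_\rho \;=:\; M(\gamma_0),
\]
so $\rho(\gamma\gamma_0)/\rho(\gamma) \to 1$ as $\rho(\gamma) \to \infty$, and likewise for $\gamma_0^{-1}\gamma$. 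Re-indexing the $\limsup$ defining $\overline{\alpha}(\gamma_0 x, y)$ by $\gamma'' = \gamma\gamma_0$ then gives $\overline{\alpha}(\gamma_0 x, y) = \overline{\alpha}(x,y)$. For the second coordinate, the Lipschitz hypothesis on $\gamma_0^{\pm 1}$ implies $-\log d(\gamma x, \gamma_0 y) = -\log d(\gamma_0^{-1}\gamma x, y) + O_{\gamma_0}(1)$; re-indexing by $\gamma' = \gamma_0^{-1}\gamma$ and using the quasi-invariance of $\rho$ produces $\overline{\alpha}(x,\gamma_0 y) = \overline{\alpha}(x,y)$.

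Finally, I apply ergodicity twice. By Fubini and joint measurability, for $\mu$-a.e.\ $y$ the slice $\overline{\alpha}(\cdot, y)$ is $\mu$-measurable; by the first invariance it is $\Gamma$-invariant, so ergodicity forces it to be $\mu$-a.e.\ equal to a constant $\tilde{\alpha}(y)$. The symmetric argument on the other coordinate produces a function $\widehat{\alpha}(x)$ with $\overline{\alpha}(x,\cdot) \equiv \widehat{\alpha}(x)$ $\mu$-a.e., for $\mu$-a.e.\ $x$. Comparing these two descriptions of $\overline{\alpha}$ on $X \times X$ via Fubini forces $\tilde{\alpha}(y) = \widehat{\alpha}(x)$ on a set of full $\mu \otimes \mu$-measure, so each is $\mu$-a.e.\ equal to a common constant $\alpha^{*}$. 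The main technical step is the double inequality controlling $\rho$ under translation by a fixed $\gamma_0$, which comes directly from subadditivity applied to $\gamma \gamma_0$ and to $\gamma = (\gamma\gamma_0)\gamma_0^{-1}$; the rest is routine measurability and ergodicity bookkeeping.
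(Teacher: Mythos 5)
Your proof is correct and follows essentially the same route as the paper's: both establish that $\oalpha$ is invariant under the $\Gamma\times\Gamma$ action on $X\times X$ (using subadditivity of $\rho$ to absorb the additive shift of $\rho(\gamma)$ under translation by a fixed element, and the Lipschitz hypothesis to absorb the multiplicative distortion of $d$, both of which become negligible as $\rho(\gamma)\to\infty$), and then conclude by ergodicity. Your $\limsup$ reformulation and the double-Fubini step at the end simply make explicit two points the paper leaves implicit, namely the joint measurability of $\oalpha$ and the proof that the $\Gamma\times\Gamma$ action on $(X\times X,\mu\otimes\mu)$ is ergodic.
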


\begin{proof}
We first show that $\oalpha$ is invariant under the 
$\Gamma \times \Gamma$ action on $X \times X$
defined by $(\gamma_1,\gamma_2) \cdot (x_1,x_2):= (\gamma_1 \cdot x_1, \gamma_2 \cdot x_2)$.
To see this, first note that $H_{x,y, \beta}$ for each $\beta < \alpha$ 
provided there exists $\eta \in \Rbb$ such that 
the set 
$$
H_{x,y,\beta}(\eta)~
=~
\left\{
\gamma \in \Gamma~
:~ 
d(\gamma (x), y)~
\leq~
e^{-\alpha \rho(\gamma) + \eta} \right\}
$$
is infinite. Indeed, suppose $\gamma_n$ is an infinite 
sequence that satisfies 
$d(\gamma_n x, y) \leq e^{-\alpha \rho(\gamma_n) + \eta}$.  
Since each sublevel set of $\rho$ is 
finite, we have $\rho(\gamma_n) \to \infty$. 
Thus, if $\beta <\alpha$, then for all sufficiently large $n$ we have 
$-\alpha \rho(\gamma_n) + \eta < -\beta \rho(\gamma_n)$. 
It follows that $H_{x,y,\beta}$ is infinite.  

Fix $\gamma_1, \gamma_2 \in \Gamma$ and suppose that  $H_{x,y,\alpha}$ is infinite.
We claim that there exists $\eta$ so that 
$H_{\gamma_1 x,\gamma_2 y,\alpha}(\eta)$ is infinite.
Indeed, suppose that 
$\gamma \in H_{x,y,\alpha}$. Set $\delta_\gamma= \gamma_2 \gamma \gamma_1^{-1}$.
By the hypothesis on $\rho$ we find that 
$\rho(\gamma) \geq \rho(\delta_\gamma) - C$ where
$C = \rho(\gamma_2) +\rho(\gamma_1^{-1}) + 2 C_\rho$.  
Thus, if $C'$ is the Lipschitz constant for $\gamma_2$, then 
$$
d(\delta_\gamma  \gamma_1 x, \gamma_2 y)~
=~
d(\gamma_2 \gamma x, \gamma_2 y)~
\leq~
C'\, 
d(\gamma x, y)~
\leq~
C'\, e^{-\alpha \rho(\gamma)}~
\leq~
e^{-\alpha \rho(\delta_\gamma) + \alpha C + \ln(C')}.
$$
The claim follows with $\eta = \alpha C + \ln(C')$.

It then follows from the first paragraph of the proof that 
$H_{\gamma_1x, \gamma_2y,\beta}$ is infinite for each $\beta < \alpha$.
Therefore, we have $\oalpha(\gamma_1 x, \gamma_2 y) \geq \oalpha(x,y)$. 
A symmetric argument proves the reverse inequality. 
Hence $\oalpha$ is $\Gamma \times \Gamma$-invariant. 

Since the $\Gamma$ acts ergodically on $(X, \mu)$, the action  
of $\Gamma \times \Gamma$  on $(X \times X, \mu \otimes \mu)$ is ergodic.
Hence $\oalpha$ is constant on a set of full measure.
\end{proof}

%%%%%%%%%%%%%%%%%%%%%%%%%%%%%%%%%%%%%%%%%%%%%%%%%%%%%%%%%%%%%%%%
%%%%%%%%%%%%%%%%%%%%%%%%%%%%%%%%%%%%%%%%%%%%%%%%%%%%%%%%%%%%%%%%

\ \\

\section{Holomorphic 1-forms and affine diffeomorphisms}\label{sec:1forms-and-affine-diffeos} 

\medskip

Let $X$ be a smooth, orientable, connected (real) surface without boundary.
A complex structure $\mu$ on $X$ is an equivalence class of charts 
$\{\chi_\alpha : U_\alpha \to \Cbb\}$ such that each transition function 
$\chi_\alpha \circ \chi_{\beta}^{-1}$ is holomorphic.
Let $\omega$ be a complex-valued 1-form that is holomorphic with respect to $\mu$.
Suppose that $\omega$ does not vanish at some $z_\alpha \in X$, then there
exists a neighborhood $V_\alpha \subset \Cbb$ so that
\begin{equation}
\label{eqn:translation-chart}
\nu_\alpha(z)~ 
=~ 
\int_{z_\alpha}^z \omega
\end{equation}
defines a biholomorphism from $V_\alpha$ onto an open set in $\Cbb$. In other words, 
$\nu_\alpha^*(dz) = \omega$. Thus, the collection $\{\nu_\alpha: V_\alpha \to \Cbb\}$  
is an atlas for $X \setminus  \omega^{-1}(0)$ whose transition functions 
have the form $z \mapsto z+ c$ with $c \in \Cbb$. That is, $\nu$ is a 
{\it translation structure} on $X \setminus \omega^{-1}(0)$. 
Integration of $|\omega|$ defines arclength, and geodesics
correspond via $\nu_\alpha$ to line segments in $\Cbb \cong \Rbb^2$.

Let $\Omega$ denote the space of complex-valued 1-forms on $X$ that are holomorphic with respect to {\it some} complex structure on $X$.
Let $\Diff^+$ denote the group of orientation preserving
self-diffeomorphisms $\phi: X \to X$.
The group $\Diff^+$ acts on the right of $\Omega$ via 
$$
\omega \cdot \phi~
:=~
\omega \circ d \phi~
=~
\phi^*(\omega).
$$
Let $GL_2^+(\Rbb)$ denote the $2 \times 2$ matrices with positive determinant.
We have a natural left action of $GL_2^+(\Rbb)$ on $\Omega$ arising from 
the canonical identification of $\Cbb$ with $\Rbb^2$ given by $(x+ iy) \mapsto (x,y)$. (Here $i = \sqrt{-1}$).
To be explicit
$$
\begin{pmatrix}
g_{11} & g_{12} \\
g_{21} & g_{22}
\end{pmatrix}
\cdot
(x+ iy)~
=~
g_{11} x + g_{12} y~  +~  i(g_{21} x + g_{22} y)
$$
This immediately leads to a left action of $GL_2(\Rbb)$ on $\Omega$:
For each tangent vector $v$ we define
$$
(g \cdot \omega)(v)~ 
:=~
g \cdot (\omega(v)).
$$

\begin{defn}
A diffeomorphism $\phi \in \Diff^+$ is said to be {\it affine with respect to $\omega$} 
iff there exists $g \in GL_2(\Rbb)$ so that 
\begin{equation}
\label{eqn:affine}
  \omega \cdot \phi~
  =~
  g \cdot \omega.
\end{equation}
The set of such $\phi$ is a subgroup of $\Diff^+$ that will be denoted by $\Aff^+_\omega$.
\end{defn}
Note that $\phi$ is affine iff $\nu_\alpha \circ \phi \circ \nu_\beta^{-1}$ coincides with a (real)
affine map $g$ of the plane $\Cbb \cong \Rbb^2$ where $\nu$ is as in 
(\ref{eqn:translation-chart}). In particular, $\phi$ is 
affine iff it maps geodesics to geodesics.

Because the action of $GL_2^+(\Rbb)$ on $\Omega$ is free, the matrix $g$ in (\ref{eqn:affine}) is uniquely determined by $\phi$.
The unique element $g$ is called the {\it differential of $\phi$} and is denoted $D \phi$.
The map $\phi \mapsto D\phi$ is a homomorphism from $\Aff^+_\omega$ to $GL_2^+(\Rbb)$.

\begin{lem}[See \S 2 of \cite{Veech89}]
\label{lem:Veech}
Suppose that $X$ is compact and $\omega$ has at least one zero.
Then 
\begin{itemize}

\item the kernel of $D$ is a finite order subgroup, 

%\item if $\phi \in \ker(D)$, then $\phi^n ={\rm id}$ some $n \leq |\omega^{-1}(0)| !$  

\item the image of $D$ is a discrete subgroup of $SL_2(\Rbb)$.
\end{itemize}
\end{lem}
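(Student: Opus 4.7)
The plan is to prove the two assertions separately, in each case exploiting the rigidity of the translation structure on $X^\circ := X \setminus \omega^{-1}(0)$ together with the compactness of $X$. The containment $D\Aff^+_\omega \subseteq SL_2(\Rbb)$ (rather than merely $GL_2^+(\Rbb)$) follows from the fact that any $\phi \in \Aff^+_\omega$ preserves the finite measure $\mu_\omega$, as was observed at the start of the introduction; so it suffices to prove discreteness of the image inside $SL_2(\Rbb)$.

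For the finiteness of $\ker(D)$, the key observation is that any $\phi \in \ker(D)$ is a \emph{local translation} in the translation charts $\nu_\alpha$ of (\ref{eqn:translation-chart}); that is, $\nu_\alpha \circ \phi \circ \nu_\beta^{-1}(z) = z + c$ for some $c \in \Cbb$ on each overlap. The first step is to show that $\phi \in \ker(D)$ is uniquely determined by its value at any single point of $X^\circ$: if $\phi_1, \phi_2 \in \ker(D)$ satisfy $\phi_1(p) = \phi_2(p)$ for some $p \in X^\circ$, then $\phi_2^{-1}\phi_1$ is a local translation fixing $p$, so it is the identity near $p$, hence on all of $X^\circ$ by connectedness (note $X^\circ$ is connected because $\omega^{-1}(0)$ is a finite set on the closed surface $X$), and finally on $X$ by continuity. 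Since each $\phi \in \ker(D)$ permutes the finite set $\omega^{-1}(0)$, fixing any $p_0 \in \omega^{-1}(0)$ yields an injection $\ker(D) \hookrightarrow \omega^{-1}(0)$ via $\phi \mapsto \phi(p_0)$, forcing $|\ker(D)| < \infty$.

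For discreteness of $D\Aff^+_\omega$, I will use the set $\Lambda \subset \Cbb \cong \Rbb^2$ of holonomy vectors of \emph{saddle connections}, i.e.\ geodesic segments of the flat metric $|\omega|$ joining two points of $\omega^{-1}(0)$ with no zero in the interior, each assigned the period $\int \omega$ along the segment. Compactness of $X$ forces $\Lambda \cap B_R(0)$ to be finite for every $R$, so $\Lambda$ is discrete in $\Rbb^2$. An affine $\phi$ sends saddle connections to saddle connections, and the induced action on holonomy vectors is precisely multiplication by $D\phi$. Hence $D\Aff^+_\omega$ stabilizes $\Lambda$ setwise. Choosing two $\Rbb$-linearly independent vectors $v_1, v_2 \in \Lambda$, any sequence $g_n \in D\Aff^+_\omega$ with $g_n \to \mathrm{Id}$ satisfies $g_n v_i \to v_i$ inside the discrete set $\Lambda$, and so $g_n v_i = v_i$ for all large $n$; since $\{v_1, v_2\}$ is a basis, $g_n = \mathrm{Id}$ eventually.

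The main obstacle I anticipate is verifying that $\Lambda$ actually contains two linearly independent vectors; this ultimately rests on the fact that on a compact translation surface with at least one zero, saddle connections exist in a dense set of directions. A self-contained approach produces one by flowing from a zero $p_0$ in an arbitrary direction: the trajectory stays in the compact surface and, after perturbing the direction slightly if necessary, returns to $\omega^{-1}(0)$, yielding $v_1$; applying the same argument in a transverse direction (or using a small rotation of the flat structure) yields $v_2$ non-parallel to $v_1$. Discreteness of $\Lambda$ itself reduces to a Bolzano--Weierstrass argument, using that distinct saddle connections with holonomy in any fixed bounded region of $\Rbb^2$ are finite in number because the underlying geodesic segments live in the compact surface $X$.
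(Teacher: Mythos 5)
Your argument for the discreteness of $D\Aff^+_\omega$ is correct and is in fact the standard one: the holonomy vectors of saddle connections form a $D\Aff^+_\omega$-invariant subset of $\Rbb^2$ that is locally finite (by compactness of $X$) and contains a basis of $\Rbb^2$. For the last point, rather than the flow argument you sketch, it is cleaner to note that a geodesic triangulation with vertex set $\omega^{-1}(0)$ (e.g.\ the Delaunay triangulation) exists and contains a Euclidean triangle, any two of whose edges are non-parallel saddle connections. The paper instead argues via the finite set of length-minimizing geodesic triangulations; both routes rest on the same local-finiteness fact, and yours is the more direct. Your reduction of $\det D\phi=\pm1$ to $\det D\phi=1$ via preservation of the finite measure $\mu_\omega$ matches the paper.

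The kernel half, however, contains a genuine error. You correctly prove that an element of $\ker(D)$ is determined by its value at a point $p \in X\setminus\omega^{-1}(0)$ (a local translation fixing a regular point is the identity near it, hence everywhere by connectedness), but you then apply this with $p_0 \in \omega^{-1}(0)$, where it fails. At a zero of order $k$ the translation structure has a cone point of angle $2\pi(k+1)$, and an element of $\ker(D)$ fixing that point may act as a nontrivial ``rotation by a multiple of $2\pi$'', cyclically permuting the sheets of the cone. Concretely, take $n$ unit tori, each slit along the segment $[0,1/2]$, and glue the upper lip of the $i$-th slit to the lower lip of the $(i+1)$-st, cyclically: the result is a translation surface with exactly two zeros (each of cone angle $2\pi n$), and the cyclic shift of the sheets is an order-$n$ element of $\ker(D)$ fixing both zeros. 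So $\phi\mapsto\phi(p_0)$ is not injective, and for $n\geq 3$ even the conclusion $|\ker(D)|\leq|\omega^{-1}(0)|$ is false. The paper avoids this by observing that $\ker(D)$ consists of conformal automorphisms of a compact Riemann surface of genus at least two (the genus bound holds because $\omega$ has a zero), hence is finite by Hurwitz's theorem. If you prefer to stay within your flat-geometric framework, a correct variant is: fix one saddle connection $\gamma_0$; any $\phi\in\ker(D)$ sends $\gamma_0$ to a saddle connection with the same holonomy vector, of which there are only finitely many, and $\phi$ is determined by $\phi(\gamma_0)$ because the midpoint of $\gamma_0$ is a regular point.
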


The image of $D$ is called the {\it Veech group} and is denoted by $\Gamma_\omega$.

\begin{proof}
Since $\omega$ is holomorphic with at least one zero, the surface $X$ has genus at least two.
If  $\phi \in \ker(D)$, then $\phi$ 
is conformal with respect to the complex structure that underlies $\omega$.
Since $X$ is compact and $\omega^{-1}(0) \neq \emptyset$, the complex structure $\omega$ has finitely many conformal automorphisms that preserve $\omega^{-1}(0)$. 

To show discreteness, it suffices to show that if  $\phi_k \in \Aff_\omega^+$ 
is a sequence so that $D \phi_k \to {\rm Id}$, 
then $D\phi_k ={\rm Id}$ for 
all sufficiently large $k$. 
Let $\Tcal$ denote the collection of all oriented triangulations of $X$
such that each 1-cell is a $d_\omega$-geodesic and the set of 0-cells 
equals  $\omega^{-1}(0)$. The Delaunay triangulation is an example of such a triangulation \cite{Th98}. Because $X$ is a closed surface, 
the number of 1-cells of $T \in \Tcal$ 
equals $3(|\omega^{-1}(0)|- \chi(X))$.
For each $T \in \Tcal$, define $|T|$ to be the maximum length of a 
1-cell in $T$. One can show that the subcollection $\Tcal_0 \subset \Tcal$ of triangulations that minimize $T \to |T|$ 
is finite. 

Note that if $\phi \in \Aff^+_\omega$, then 
$\phi(T) \in \Tcal$ and moreover $|\phi(T)|\leq \|D\phi\| \cdot |T|$
where $\|D\phi\|$ is the operator norm. 
We claim that for each sufficiently large $k$, the map
$\phi_k$ preserves $\Tcal_0$. Indeed, because $X$ is closed
and $\omega^{-1}(0)$ is finite, one can show that the set of 
lengths of geodesics with endpoints in $\omega^{-1}(0)$---{\it saddle
connections}---is discrete, and so the image of
of $T \to |T|$ is discrete.
We have $|\phi_k(T)| \leq \|D\phi_k\|\cdot |T|$ and hence 
for each sufficiently large $k$, we have that $\phi_k(T)$
lies in $\Tcal_0$. 

Because $\Tcal_0$ is finite, there exists $n$ so that for each $k$ we have $\phi_k^n(T)=T$
for each oriented triangulation $T \in \Tcal_0$. 
Thus, because $\phi_k^n$ is affine, 
we find that $\phi_k^n = {\rm id}$, and hence $D\phi^{n}_k = {\rm Id}$.
Thus, each $D\phi_k$ is conjugate to a rotation matrix of order 
at most $n$, and hence the traces of the $D\phi_k$ 
form a discrete set. Because $D \phi_k \to {\rm Id}$
and the identity in $SL_2(\Rbb)$ is the only rotation 
matrix with trace two, 
we have $D \phi_k ={\rm Id}$ for all sufficiently large $k$.

Finally, we show that $\det(D\phi)=1$ for each $\phi \in \Aff^+_\omega$.
A straightforward computation gives
\begin{equation}
\label{eqn:preserve-measure}
\phi^*(\omega \wedge \oomega)~
=~
\phi^*(\omega) \wedge \overline{\phi^*(\omega)}~
=~
(D \phi \cdot \omega ) \wedge \overline{ D\phi \cdot \omega}  ~
=~
\det(D \phi) \cdot \omega \wedge \oomega,
\end{equation}
and hence since $\phi$ is a diffeomorphism
$$
\int_X 
\omega \wedge \oomega~
=~
\int_X 
\phi^*(\omega \wedge \oomega)~
=~
\det(D \phi)
\int_X \omega \wedge \oomega.
$$
Hence $\det(D\phi)=1$ and the image lies in $SL_2(\Rbb)$.
\end{proof}

The 2-form $(i/2) \cdot \omega \wedge \oomega$ is the {\it area form} associated to $\omega$.
The associated measure $\chi_\omega$ is defined by 
$$
\chi_\omega(E)~
:=~
(i/2) \int_E \omega \wedge \oomega.
$$
From (\ref{eqn:preserve-measure}) we see that each $\phi \in \Aff^+_\omega$
preserves the measure $\chi_\omega$.

In the next section we will show that if $\Gamma_\omega$ is a
cofinite subgroup of $SL_2(\Rbb)$, then the methods of \S \ref{sec:duality} 
apply to the shrinking target problem for the 
action of $\Aff^+_\omega$ on $X$.
We will need the following.

\begin{lem}
\label{lem:fix-zeros}
\ \\
Suppose that $\phi \in \Aff^+_{\omega}$ so that $D \phi= I$ and 
$\phi$ acts as the identity on $\omega^{-1}(0)$.
Then $\phi = {\rm id}$.
\end{lem}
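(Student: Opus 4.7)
The plan is to reduce the claim to a local analysis near a zero of $\omega$ using the holomorphicity forced by $D\phi = I$, and then conclude globally by analytic continuation on the connected Riemann surface $X$.

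First I would unpack $D\phi = I$: it is precisely the equation $\phi^*\omega = \omega$. In any local translation chart on $X \setminus \omega^{-1}(0)$, $\phi$ therefore acts as a (holomorphic) translation, so $\phi$ is a biholomorphism of the compact connected Riemann surface $(X, J_\omega)$ underlying $\omega$. Since two holomorphic self-maps of a connected complex manifold that agree on a nonempty open set are equal, it suffices to show that $\phi$ coincides with the identity on some open neighborhood of a chosen zero $p \in \omega^{-1}(0)$.

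Next I would localize near $p$ using a holomorphic coordinate $z$ centered at $p$ in which $\omega = z^n\, dz$, where $n$ is the order of the zero. Since $\phi(p) = p$ and $\phi$ is holomorphic, expand $\phi(z) = a_1 z + a_2 z^2 + \cdots$. The equation $\phi^*\omega = \omega$ becomes the functional identity $(\phi(z))^n \phi'(z) = z^n$. Matching the lowest-order coefficient gives $a_1^{n+1} = 1$; matching the coefficient of $z^{n+k}$ then produces an equation of the form $(n+k+1)\,a_{k+1}/a_1 = 0$ (plus lower-order terms already known to vanish), and an easy induction forces $a_k = 0$ for every $k \geq 2$. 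Hence $\phi(z) = a_1 z$ in this neighborhood, with $a_1$ an $(n+1)$-th root of unity.

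The main obstacle, and the reason the full hypothesis is needed, is excluding $a_1 \neq 1$. A nontrivial $a_1$ would make $\phi$ a local cone rotation around $p$ which fixes $p$ pointwise but cyclically permutes the $n+1$ distinguished tangent directions at $p$ (the ``prongs'' determined by the translation structure). The hypothesis that $\phi$ acts as the identity on $\omega^{-1}(0)$ is designed to rule out exactly these nontrivial cone rotations --- read as preserving each prong at each zero, it forces $a_1 = 1$. Once $a_1 = 1$, $\phi$ equals the identity on a neighborhood of $p$, and the identity principle yields $\phi = \mathrm{id}$ on all of $X$.
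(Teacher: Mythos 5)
Your route is genuinely different from the paper's. The paper takes an arbitrary point $x \in X \setminus \omega^{-1}(0)$, joins it to a zero $z$ by a geodesic segment $\gamma$, and asserts that since $\phi(z)=z$ and $D\phi = I$ the restriction of $\phi$ to $\gamma$ is the identity. You instead pass to the underlying complex structure, put $\omega$ into the normal form $z^n\,dz$ at a zero, solve $(\phi(z))^n\phi'(z)=z^n$ to get $\phi(z)=a_1 z$ with $a_1^{n+1}=1$, and invoke the identity principle. That local computation is correct, and it is the sharper analysis: it isolates exactly one possible nontrivial behaviour, a cone rotation with $a_1\neq 1$.

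The gap is your last step. In the paper, ``acts as the identity on $\omega^{-1}(0)$'' means only that $\phi$ fixes each zero as a point of $X$ (it is the defining condition of $\Aff^+_{\omega,0}$, ``the identity permutation on $\omega^{-1}(0)$''); it carries no information about prongs. When you propose to ``read'' the hypothesis as preserving each prong, you are not deducing $a_1=1$ --- you are assuming it, i.e.\ replacing the stated hypothesis by a strictly stronger one. And the replacement is unavoidable, because the stated hypothesis really is insufficient: let $\pi:X\to\Tbb^2$ be the double cover branched over two points and $\omega=\pi^*(dz)\in\Hcal(1,1)$. The deck involution $\iota$ satisfies $\iota^*\omega=\omega$, so $D\iota=I$; it fixes both zeros (each branch point has a unique preimage); yet $\iota\neq{\rm id}$. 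Near each zero it is precisely your rotation with $a_1=-1$ and $n=1$, consistent with $a_1^{n+1}=1$. So the lemma fails as literally stated, and the paper's own proof jumps over the same point: at a cone angle $2\pi(n+1)$ there are $n+1$ geodesic rays leaving $z$ with any given direction vector, so ``$\phi(z)=z$ and $D\phi=I$'' does not force $\phi|_\gamma={\rm id}$. Either argument is rescued by strengthening the hypothesis to something that kills the cone rotation, for instance that $\phi$ fixes a regular point or a separatrix at some zero; your write-up has the virtue of making this explicit, but as a proof of the statement as given it is incomplete.
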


\begin{proof}
Suppose that $x \in X \setminus \omega^{-1}(0)$. Because $X$ is connected and 
$\omega^{-1}(0) \neq \emptyset$ there exists $z \in \omega^{-1}(0)$ and a
geodesic segment $\gamma$ joining  $z$ to $x$.
Because $\phi(z)=z$ and $D \phi=I$, the restriction of $\phi$ to $\gamma$ 
is the identity. Hence $\phi(x)=x$.
\end{proof}

It follows from (\ref{eqn:affine}) that $\Aff^+_{\omega}$ preserves the 
finite set $\omega^{-1}(0)$. Let $\Aff^+_{\omega,0}$ denote the 
subgroup consisting of $\phi$ who act as the identity permutation on $\omega^{-1}(0)$.
Since $\omega^{-1}(0)$ is finite, the group $\Aff^+_{\omega,0}$ has finite index in $\Aff^+_\omega$. 

\begin{coro}
\label{coro:inj}
The restriction of $D$ to $\Aff^+_{\omega,0}$ is injective.
Moreover, the image is a finite index subgroup of $\Gamma_\omega$.
\end{coro}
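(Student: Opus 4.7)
The plan is to deduce both claims directly from the setup already established: Lemma \ref{lem:fix-zeros} handles injectivity, and a routine index-pushforward argument handles finite index.

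First, I would prove injectivity. Suppose $\phi \in \Aff^+_{\omega,0}$ lies in the kernel of $D$, so $D\phi = I$. By the definition of $\Aff^+_{\omega,0}$, the map $\phi$ fixes $\omega^{-1}(0)$ pointwise. These are precisely the hypotheses of Lemma \ref{lem:fix-zeros}, so we conclude $\phi = \mathrm{id}$. Hence $\ker\bigl(D|_{\Aff^+_{\omega,0}}\bigr) = \{\mathrm{id}\}$, and the restriction is injective.

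Next, I would establish that $D(\Aff^+_{\omega,0})$ has finite index in $\Gamma_\omega$. By definition $\Gamma_\omega = D(\Aff^+_\omega)$, and the text immediately preceding the corollary notes that $\Aff^+_{\omega,0}$ has finite index in $\Aff^+_\omega$ (this follows from finiteness of $\omega^{-1}(0)$: the action of $\Aff^+_\omega$ on $\omega^{-1}(0)$ gives a homomorphism to a finite symmetric group, and $\Aff^+_{\omega,0}$ is its kernel). Write $\Aff^+_\omega = \bigsqcup_{i=1}^n \phi_i \cdot \Aff^+_{\omega,0}$. Applying the homomorphism $D$ gives $\Gamma_\omega = \bigcup_{i=1}^n D(\phi_i) \cdot D(\Aff^+_{\omega,0})$, so $[\Gamma_\omega : D(\Aff^+_{\omega,0})] \leq n < \infty$.

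There is no real obstacle here; the corollary is essentially a packaging of Lemma \ref{lem:fix-zeros} together with the observation that quotienting by the finite-index subgroup of ``zero-fixing'' affine maps eliminates the kernel of $D$ (which by Lemma \ref{lem:Veech} is already finite, being the group of conformal automorphisms preserving $\omega^{-1}(0)$). The only point worth stating carefully is why $[\Aff^+_\omega : \Aff^+_{\omega,0}]$ is finite, which I would justify in one line using the action on the finite set $\omega^{-1}(0)$.
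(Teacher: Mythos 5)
Your proof is correct and follows essentially the same route as the paper: injectivity is immediate from Lemma \ref{lem:fix-zeros}, and finite index comes from pushing the finite-index inclusion $\Aff^+_{\omega,0} \subset \Aff^+_\omega$ through the surjection $D$ onto $\Gamma_\omega$. The only cosmetic difference is that the paper cites finiteness of $\ker(D)$ for the second claim, whereas your coset argument shows this is not actually needed.
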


\begin{proof}
The first statement follows immediately from Lemma \ref{lem:fix-zeros}.
The second statement follows from the fact that $\ker(D)$ is finite
and $D$ is onto $\Gamma_\omega$.
\end{proof}

\ \\
%%%%%%%%%%%%%%%%%%%%%%%%%%%%%%%%%%%%%%%%%%%%%%%%%%%%%%%%%

\section{The bundle $Y$ and an affine invariant orbifold of marked 1-forms}
\label{sec:aff-manifold}

\medskip

Let $\omega$ be a holomorphic 1-form on a compact surface without boundary and suppose 
$\omega^{-1}(0) \neq \emptyset$.
As above, let $\Aff^+_{\omega,0}$ denote the group of affine diffeomorphisms $\phi$
such that $\phi$ acts as the identity on $\omega^{-1}(0)$. 
Let $\Gamma$ denote the image of this group under $D$.
By Corollary \ref{coro:inj}, the map $D$ has a section, a homomorphism  
$\sigma: \Gamma \to \Aff^+_\omega$ whose image equals $\Aff^+_{\omega,0}$.
In particular, $\Gamma$ acts on $X$ via 
\begin{equation}
\label{eqn:gamma-action}
    \gamma \cdot x~
    =~
    \sigma(\gamma)(x).
\end{equation}
Moreover, this action preserves the measure $\mu:=\chi_\omega$.

For technical reasons that will soon become apparent, we will replace $X$ with 
$\dot{X}:= X \setminus \omega^{-1}(0)$. As the group $\Gamma$ acts as the identity on $\omega^{-1}(0)$
and the set $\omega^{-1}(0)$ measure zero, this replacement has no effect.

By Corollary \ref{coro:inj}, the group $\Gamma$ has finite index in $\Gamma_\omega$.
Hence if, we assume that $\Gamma_\omega$ is a lattice in $G:=SL_2(\Rbb)$, then $\Gamma$ is a lattice, and so we may apply the methods of \S \ref{sec:duality} to study the shrinking target problem associated to the action in (\ref{eqn:gamma-action}). Thus, 
our goal is to understand 
the action of $G$ on the bundle $Y :=  (G \times \dot{X})/ \Gamma$ where $\Gamma$
acts on $G \times \dot{X}$ via
\begin{equation}
\label{eqn:bundle-construct}
(g, x) \cdot \gamma~
:=~
(g \cdot \gamma, \gamma^{-1} \cdot x)~
=~
(g \cdot \gamma, \sigma(\gamma)^{-1}(x)).
\end{equation}
In this section we show that the $G$-action on $Y$ is isomorphic 
to the $G$-action on a certain closed, $G$-invariant suborbifold in a stratum
of marked 1-forms.  Before describing this isomorphism, we recall the definition of a stratum of marked 1-forms.

Let $X^{(d)}$ denote the set of all $d$-tuples $\vx= (x_1, \ldots, x_d)$ 
of distinct points in $X$.
The group $\Diff^+$ act on the right of $X^{(d)}$ via 
$$
(x_1, \ldots, x_d) \cdot \phi~
=~
(\phi^{-1}(x_1), \ldots, \phi^{-1}(x_d)).
$$
Thus, $\Diff^+$ acts on the right of $\Omega \times X^{(d)}$ by 
$(\omega, \vx)= (\omega \cdot \phi, \vx \cdot \phi)$.
The quotient $\Omega \times X^{(d)}/\Diff^+$ is the space of (marked) 1-forms on $X$.

Let $\alpha= (\alpha_1, \ldots, \alpha_d)$ be a $d$-tuple of natural numbers.
Let $\Lcal(\alpha) \subset \Omega \times X^{(d)}$ denote the set of tuples 
$(\omega, x_1, \dots, x_d)$ 
so that 
\begin{itemize}

\item the set $\{x_1, \ldots, x_d\}$ contains the zero set of $\omega$,  and

\item the order of the zero $x_i$ equals $\alpha_i$.

\end{itemize}
Note that we allow for some $\alpha_i$ to equal zero in which case $x_i$ 
is not a zero but rather a {\it fake zero}.
A fake zero is also called a {\em marked point} \cite{Apisa-Wright}.
The {\it stratum} $\Hcal(\alpha)$ is defined to be the quotient 
$\Lcal(\alpha)/\Diff^+$.

The group $G:= SL_2(\Rbb)$ acts on $\Lcal(\alpha)$ by
$g \cdot (\omega, x_1, \ldots, x_d):= ( g \cdot \omega, x_1, \ldots, x_d)$.
As in (\ref{eqn:preserve-measure}) we have 
that $(g \cdot \omega) \wedge \overline{g \cdot \omega} = \det(g)\, \omega \wedge \oomega$. Hence $\chi_{g \cdot \omega}(X)= \chi_{\omega}(X)$ for each 
$g \in G$.
Define $\Lcal^1(\alpha)$ as the subset of $\Lcal(\alpha)$
consisting of $(\omega,x_1, \ldots, x_d)$ so that 
$\chi_{\omega}(X)=1$. The left $G$-action on $\Lcal(\alpha)$ 
descends to the quotient $\Hcal(\alpha)$ and we define 
$\Hcal^1(\alpha)$ to be the image of $\Lcal^1(\alpha)$ under this quotient.

We now return to our analysis of the bundle $Y$.
Suppose that $\omega$ has zeros $x_1, \ldots, x_{d-1}$ with orders 
$\alpha_1, \ldots, \alpha_{d-1}$. Set $\alpha=(\alpha_1, \ldots, \alpha_{d-1}, 0)$.
Define a map $\Psi: G \times \dot{X} \to \Lcal^1(\alpha)$ by 
$$
\Psi(g,x)~
=~ 
(g \cdot \omega, x_1, \ldots, x_{d-1}, x).
$$
This map descends to a map $\Phi:(G \times \dot{X})/\Gamma \to \Hcal^1(\alpha)$.
Indeed, if $\phi = \sigma(\gamma)$, then $\phi$ acts as the identity on $\omega^{-1}(0)$,
and so by also using (\ref{eqn:affine})
\begin{eqnarray*}
\Psi((g,x)\cdot \gamma)
&=&
( g \cdot \gamma \cdot \omega,\, x_1, \ldots, x_{d-1}, \phi^{-1}(x))
\\
&=&
(g \cdot \omega \cdot \phi,\, \phi^{-1}(x_1), \ldots, \phi^{-1}(x_{d-1}), \phi^{-1}(x))
\\
&=&
\Psi(g,x) \cdot \phi.
\end{eqnarray*}

We describe these maps with the following diagram.

 \begin{equation*}
    \begin{tikzcd}
        G \times \dot{X} 
        \ar[d, "q"] 
        \ar[r, "\Psi"] 
        & 
        \Lcal^1(\alpha) \ar[d, " "] \ar[r, hook, " "]
        &
        \Omega \times X^{(d)} \ar[d] \\
        Y 
        \ar[r, "\Phi"] 
        & 
         \Hcal^1(\alpha)  \ar[r, hook, " "] & (\Omega \times X^{(d)}) /\Diff^+ \\
    \end{tikzcd}
 \end{equation*}

\begin{lemma}
\label{lem:Phi-injective}
The map $\Phi: Y \to \Hcal^1(\alpha)$ is injective. 
\end{lemma}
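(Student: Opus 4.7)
The plan is to unpack the hypothesis $\Phi([g,x])=\Phi([h,y])$, which says that there exists $\phi\in\Diff^+$ with $\Psi(g,x)\cdot\phi=\Psi(h,y)$, and then to show that the diffeomorphism $\phi$ produced is forced to lie in $\sigma(\Gamma)=\Aff^+_{\omega,0}$. Once that is done, writing $\gamma=D\phi$ will identify $(h,y)$ with $(g,x)\cdot\gamma$, establishing the injectivity.

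First I would compare components of the tuple $\Psi(g,x)\cdot\phi=\Psi(h,y)$. The last-coordinate equation gives $\phi^{-1}(x)=y$. The middle coordinates give $\phi^{-1}(x_i)=x_i$ for $i=1,\ldots,d-1$, so $\phi$ fixes $\omega^{-1}(0)$ pointwise. The first coordinate gives $\phi^*(g\cdot\omega)=h\cdot\omega$. Since the left $G$-action on $\Omega$ is pointwise on values and so commutes with pullback by diffeomorphisms, this rewrites as $g\cdot\phi^*(\omega)=h\cdot\omega$, i.e.
\[
\phi^*(\omega)\ =\ (g^{-1}h)\cdot\omega.
\]
This exhibits $\phi$ as an orientation preserving affine diffeomorphism with respect to $\omega$, with differential $D\phi=g^{-1}h\in SL_2(\Rbb)$. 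Because $\phi$ fixes every zero of $\omega$, we in fact have $\phi\in\Aff^+_{\omega,0}$, and so $\gamma:=g^{-1}h$ lies in $\Gamma$.

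Next I would invoke Corollary \ref{coro:inj}: the map $D$ restricted to $\Aff^+_{\omega,0}$ is injective, so $\phi$ is the unique element $\sigma(\gamma)\in\Aff^+_{\omega,0}$ with differential $\gamma$. Consequently $y=\phi^{-1}(x)=\sigma(\gamma)^{-1}(x)=\gamma^{-1}\cdot x$ in the sense of \eqref{eqn:gamma-action}, and $h=g\cdot\gamma$. Comparing with the $\Gamma$-action \eqref{eqn:bundle-construct} gives
\[
(h,y)\ =\ (g\cdot\gamma,\ \gamma^{-1}\cdot x)\ =\ (g,x)\cdot\gamma,
\]
so $[g,x]=[h,y]$ in $Y$, as required.

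The only subtlety I expect is the bookkeeping around the last (unordered) step: one must keep straight that the fake zero $x$ is tracked in a labeled coordinate of the tuple, so that $\phi^{-1}(x)=y$ is forced rather than an arbitrary permutation; this is built into the definition of the $\Diff^+$-action on $X^{(d)}$ and the fact that $\alpha_d=0$ is distinguished from the other $\alpha_i$. Beyond that, the argument is a direct translation between the three ingredients $(g\cdot\omega,\phi^*,\text{zero-preservation})$ and the three factors defining the $\Gamma$-action on $G\times\dot X$.
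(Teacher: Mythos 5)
Your argument is correct and follows essentially the same route as the paper's proof: unpack the tuple equation, deduce $\phi^{-1}(x_i)=x_i$, $y=\phi^{-1}(x)$, and $\omega\cdot\phi=(g^{-1}h)\cdot\omega$, conclude $\phi\in\Aff^+_{\omega,0}$ with $D\phi=g^{-1}h\in\Gamma$, and identify $(h,y)=(g,x)\cdot(g^{-1}h)$. The extra remarks on the labeled coordinates and on the $G$-action commuting with pullback are fine and consistent with what the paper leaves implicit.
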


\begin{proof}
Suppose that $(g \omega,x_1, \ldots, x_{d-1},x) \cdot \phi
= (h\omega,x_1, \ldots, x_{d-1},y)$ for some $\phi \in \Diff^+$.
Then $g \omega \cdot \phi = h \omega$ and $y= \phi^{-1}(x)$. 
Hence $\omega \cdot \phi = g^{-1} h \cdot \omega$, and so $\phi$ is affine
with differential $g^{-1} h$. Therefore, $\phi^{-1}(x_i)=x_i$ for each $i$
and so $\phi \in \Aff^+_{\omega,0}$ and $ g^{-1} h \in \Gamma$.
We have $(g,x) \cdot (g^{-1}h) = (g g^{-1}h, \phi^{-1}(x)) = (h,y)$. Hence $\Phi$ is injective. 
\end{proof}

\begin{lemma}
\label{lem:closed-invariant}
Suppose that $\Gamma$ is a lattice subgroup of $G=SL_2(\Rbb)$.
Then the image of $\Phi$ is a closed, 
$G$-invariant subset of $\Hcal(\alpha)$.
\end{lemma}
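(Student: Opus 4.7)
The plan is to dispatch $G$-invariance directly from the definition of $\Phi$, and then to reduce closedness to Smillie's theorem on orbit closures of lattice translation surfaces.

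$G$-equivariance of $\Phi$ is immediate from $\Psi(g_1 g_2, x) = ((g_1 g_2)\cdot\omega, x_1, \ldots, x_{d-1}, x) = g_1 \cdot \Psi(g_2, x)$: since $G$ acts on the left and $\Gamma$ on the right of $G \times \dot{X}$, one obtains $\Phi(g \cdot [h, x]) = g \cdot \Phi([h, x])$. In particular, $\Phi(Y)$ is $G$-invariant.

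For closedness, set $\alpha' := (\alpha_1, \ldots, \alpha_{d-1})$ and let $p \colon \Hcal^1(\alpha) \to \Hcal^1(\alpha')$ denote the continuous, $G$-equivariant forgetful map that drops the final marked-point coordinate; this is well defined because $\alpha_d = 0$, so the coordinate being forgotten is a fake zero. Set $\omega_0 := [\omega, x_1, \ldots, x_{d-1}] \in \Hcal^1(\alpha')$. The key step is to establish the identity
\[
\Phi(Y)~=~p^{-1}\bigl(G \cdot \omega_0\bigr).
\]
The inclusion ``$\subseteq$'' is immediate. For ``$\supseteq$'', suppose $q = [\eta, q_1, \ldots, q_{d-1}, y]$ satisfies $p(q) \in G \cdot \omega_0$, so that there exist $g \in G$ and $\phi \in \Diff^+$ with $\eta \cdot \phi = g\omega$ and $\phi^{-1}(q_i) = x_i$ for $i \leq d-1$. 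Applying $\phi$ to the full tuple, as in the proof of Lemma \ref{lem:Phi-injective}, shows $q = \Phi([g, \phi^{-1}(y)])$; note $\phi^{-1}(y) \in \dot{X}$ because $y$ is a marked point rather than a zero.

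Once this identification is in place, closedness of $\Phi(Y)$ in $\Hcal^1(\alpha)$ (and hence in $\Hcal(\alpha)$, of which $\Hcal^1(\alpha)$ is a closed subvariety) reduces to closedness of the orbit $G \cdot \omega_0$ in $\Hcal^1(\alpha')$. The stabilizer of $\omega_0$ in $G$ is precisely $\Gamma$ by the same equivalence-relation computation as in Lemma \ref{lem:Phi-injective}, and by Corollary \ref{coro:inj} the Veech group $\Gamma_\omega$ is a lattice as well. Smillie's theorem---that the $SL_2(\Rbb)$-orbit of a translation surface is closed in its stratum if and only if its Veech group is a lattice---then yields closedness of $G \cdot \omega_0$, and hence of $\Phi(Y)$ by continuity of $p$. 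I expect the main obstacle to be the careful verification of the preimage identity, since it requires tracking how the $\Diff^+$-equivalence in $\Hcal(\alpha)$ interacts with the $\Gamma$-quotient defining $Y$; the invocation of Smillie's theorem is the essential non-trivial external input.
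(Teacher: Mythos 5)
Your proposal is correct and follows essentially the same route as the paper: $G$-invariance from the definition of $\Phi$, then identifying $\Phi(Y)$ as the preimage of the closed orbit $G\cdot[\omega,x_1,\ldots,x_{d-1}]$ under the continuous forgetful map, with the closedness of that orbit coming from the lattice hypothesis (the paper cites Veech for this where you cite Smillie). Your verification of the inclusion $\supseteq$ in the preimage identity is a useful elaboration of a step the paper only asserts.
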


\begin{proof}
The image equals the set $\{(g \omega, x_1, \ldots, x_{d-1}, x)\Diff^+\, :\, (g, x) \in G \times \dot{X} \}$.
This set is $G$-invariant.

To see that the set is closed, consider the map 
$f: \Hcal^1(\alpha_1, \ldots, \alpha_d) \to \Hcal^1(\alpha_1, \ldots, \alpha_{d-1})$ that forgets
the fake zero. 
The image of $\Phi$ coincides with the preimage $f^{-1}(G (\omega,x_1, \ldots, x_{d-1})/\Diff^+)$. 
Because $\Gamma_{\omega}$ is a lattice,  
the orbit $G (\omega,x_1, \ldots, x_{d-1})/\Diff^+$ is a closed subset of $\Hcal^1(\alpha_1, \ldots, \alpha_{d-1})$ \cite{Veech89}.
Thus, since the forgetful map $f$ is continuous,
the preimage is closed.
\end{proof}

Let $m$ be a Haar measure on $G=SL_2(\Rbb)$.
Let $\nu := q_*(m \times \mu)$ where 
$q: \mathcal{F} \times \dot{X} \to Y$ is the quotient map restricted to 
a fundamental domain $\mathcal{F} \times \dot{X}$.  

\begin{lem}
\label{lem:equivariant-measures}
The pushforward $\Phi_*(\nu)$ is a $G$-invariant measure with support $\Phi(Y)$. The map 
$\Phi$ is a measure preserving isomorphism from the $G$-action on $(Y, \nu)$ to 
the $G$-action on $(\Phi(Y), \Phi_*(\nu))$.
\end{lem}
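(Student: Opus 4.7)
The plan is to verify three ingredients from which the statement follows directly: (i) $\Phi$ intertwines the $G$-actions on $Y$ and on $\Hcal^1(\alpha)$; (ii) $\nu$ is $G$-invariant on $Y$; and (iii) $\Phi$ is a Borel isomorphism onto its image. Granted (i)--(iii), the pushforward $\Phi_*(\nu)$ is automatically $G$-invariant, its support equals $\Phi(\supp \nu) = \Phi(Y)$, and $\Phi$ is the desired measure-preserving isomorphism of $G$-spaces.

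First I would dispatch (i) by a direct computation at the level of $\Psi: G \times \dot X \to \Lcal^1(\alpha)$. Since the $G$-action on $\Lcal^1(\alpha)$ acts on the 1-form alone and leaves the marked points fixed, one has
\[
\Psi(gh,x)~=~(gh\cdot\omega,\, x_1,\ldots,x_{d-1},\, x)~=~g\cdot \Psi(h,x),
\]
and this equivariance descends through the $\Gamma$-quotient on the left and the $\Diff^+$-quotient on the right to give $\Phi(g \cdot y) = g \cdot \Phi(y)$.

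For (ii), I would observe that $m \times \mu$ on $G \times \dot X$ is simultaneously invariant under the right $\Gamma$-action in \eqref{eqn:bundle-construct} and under the left $G$-action by left multiplication on the first factor. Right $\Gamma$-invariance uses that $SL_2(\Rbb)$ is unimodular (so $m$ is right-invariant) and that $\sigma(\gamma) \in \Aff^+_{\omega,0}$ preserves $\mu = \chi_\omega$ by \eqref{eqn:preserve-measure}; left $G$-invariance is just left-invariance of $m$. The two actions commute, so $m \times \mu$ descends to a $G$-invariant Borel measure on $Y$ that is independent of the choice of fundamental domain and coincides with $\nu$. Combined with (i), a change-of-variables calculation yields $G$-invariance of $\Phi_*(\nu)$.

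Finally, for (iii), note that $\Phi$ is continuous (induced by the continuous $\Psi$) and injective by Lemma \ref{lem:Phi-injective}. The space $Y$ is standard Borel, being the quotient of the standard Borel space $G \times \dot X$ by the proper action of the discrete group $\Gamma$; the image $\Phi(Y)$ is closed in $\Hcal^1(\alpha)$ by Lemma \ref{lem:closed-invariant}. Hence the Lusin--Souslin theorem promotes $\Phi$ to a Borel isomorphism onto $\Phi(Y)$. Since $\supp(m \times \mu) = G \times \dot X$, we have $\supp \nu = Y$ and therefore $\supp(\Phi_*\nu) = \Phi(Y)$. The main point requiring care is step (ii): one must verify that translating the fundamental domain $\Fcal$ by $g \in G$ produces the same pushforward measure on $Y$, which is where unimodularity and the commutation of the two actions enter; everything else is formal once the equivariance of $\Psi$ is in hand.
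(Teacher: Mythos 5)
Your proof is correct and takes essentially the same approach as the paper, whose entire argument is the one-sentence observation that $\Phi$ intertwines the two $G$-actions. Your steps (ii) and (iii) carefully supply facts the paper establishes elsewhere or leaves implicit (the $G$-invariance of $\nu$ asserted in \S\ref{sec:duality}, and the injectivity and closedness of $\Phi$ from Lemmas \ref{lem:Phi-injective} and \ref{lem:closed-invariant}), so this is the same argument written out in full.
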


\begin{proof}
The map $\Phi$ intertwines the respective actions of $G$ on $Y$ and $G$ on $\Hcal^1(\alpha)$. 
\end{proof}

%%%%%%%%%%%%%%%%%%%%%%%%%%%%%%%%%%%%
%%%%%%%%%%%%%%%%%%%%%%%%%%%%%%%%%%%%
%%%%%%%%%%%%%%%%%%%%%%%%%%%%%%%%%%%%
%%%%%%%%%%%%%%%%%%%%%%%%%%%%%%%%%%%%

\ \\

\section{The spectral gap of Avila and Gou\"ezel}
\label{sec:AG}

\medskip

In this section we recall the main result of \cite{Avila-Gouezel-13-small-eigenvalues} and explain why it applies to our situation. Their result concerns the `left regular representation' $\pi$
of $G=SL_2(\Rbb)$ on $L^2(\Hcal^1(\alpha), \mu)$ where $\mu$ is an 
ergodic $SL_2(\Rbb)$-invariant measure. To be concrete, $\pi$ is defined by 
$$
\pi(g) u([\omega, \vx])~
=~
u(g^{-1} \cdot[\omega, \vec x])
$$
where we have abbreviated $(\omega, \vx)\, \Diff^+$ with $[\omega, \vx]$.
Because $\mu$ is $G$-invariant, the representation $\pi$ is unitary.
Thus, the representation $\pi$ decomposes into irreducible unitary representations.

The spherical dual of $G$ consists of isomorphism classes of irreducible 
unitary representations of $G$ that have a vector that is fixed by the 
maximal compact subgroup $K=SO(2)$. The spherical dual of $SL_2(\Rbb)$ 
consists of the trivial representation, the spherical principal series, and the complementary series.

There are several explicit models of the spherical principal and complementary series. 
These are usually described using a parameter `$s$', but unfortunately the meaning 
of `$s$' depends on the model. In order to fix the meaning of `$s$' we choose 
the following explicit models given in \S 3.1 and \S 3.2 of \cite{GGP}.

For each $s \in \Cbb$, we let $\Scal^+_s$ 
denote the space of smooth functions $f:\Rbb^2 \to \Cbb$ such that
\begin{itemize}

\item $f(-x)=f(x)$

\item $f(t \cdot x)= t^{s-1} \cdot f(x)$.
\end{itemize}

On $\Scal^+_s$ define the following the inner product 
$$
\langle u, v \rangle_s~
=~
\left\{
\begin{array}{ll}
 \displaystyle\int_{|x|=1} u(x) \cdot \ov(x)\, d \sigma(x) &\mbox { if } {\rm Re}(s)=0 \\  [1em]
 \displaystyle\int_{|x|=1} 
 \displaystyle\int_{|y|=1} 
  u(x) \cdot \ov(y)\,    
K_s(x, y)\, d \sigma(y)\, d \sigma(x) 
 & \mbox { if }  0 < s < 1,
\end{array}
\right.
$$
where $d \sigma(x)$ is arclength measure on the circle  $|x|=1$ and 
$$
K_s(x,y)~ 
:=~
|x_1 y_2 - y_1 x_2|^{-s-1}.
$$
Let $\Hcal^+_s$ denote the completion of $\Scal^+_s$ 
with respect to the associated norm $\|\cdot \|_s$.
The group $SL_2(\Rbb)$ acts on the left of $\Rbb^2$ by matrix multiplication.
Define the representation $\pi_s$ on  $\Hcal_s^{+}$ by
$$
\pi_s(g) u(x)~
=~
u( g^{-1} \cdot x).
$$
If ${\rm Re}(s)=0$, then $\pi_s$ is called 
a {\it spherical principal series representation}.
If $0 < s <1 $, then $\pi_s$ is called a {\it complementary series representation}.
One should regard $\pi_1$ as the trivial representation.

The theorem of Avila and Gou\"ezel provides a `gap' between the trivial 
representation and the other spherical irreducible representations that appear in $\pi$.

\begin{thm}[compare Theorem 2.3 \cite{Avila-Gouezel-13-small-eigenvalues}]
\label{thm:AG}
Let $\mu$ be an ergodic $SL_2(\Rbb)$-invariant measure on $\Hcal^1(\alpha)$ and let $\pi$ be the associated unitary
representation of $SL_2(\Rbb)$ on $L^2(\Hcal^1(\alpha), d\mu)$.
Then the supremum of $s \in [0,1)$ 
such that $\pi_s$ appears in the decomposition of $\pi$ 
is strictly less than 1.
\end{thm}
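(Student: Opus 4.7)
The plan is to reduce the statement to a direct application of Theorem 2.3 of \cite{Avila-Gouezel-13-small-eigenvalues}, since that theorem is already formulated as a uniform spectral gap on the spherical dual parameter for any ergodic $SL_2(\Rbb)$-invariant probability measure on a stratum. The only thing that needs to be said, then, is why their hypotheses apply to our setting. By the Eskin--Mirzakhani--Mohammadi classification, every ergodic $SL_2(\Rbb)$-invariant probability measure on $\Hcal^1(\alpha)$ is supported on an affine invariant submanifold and coincides there with the associated affine measure; this is precisely the class of measures Avila--Gou\"ezel consider. Hence the statement is nothing more than a translation of their result into the representation-theoretic language used here.

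For context, I would sketch the underlying mechanism. Decompose $\pi$ as a direct integral of irreducibles, and restrict attention to the orthogonal complement of the $G$-fixed constants. Since $K = SO(2)$ is compact, each irreducible is either tempered (spherical principal series) or isomorphic to some $\pi_s$ with $0 < s < 1$. Ruling out a sequence of complementary series with $s_n \nearrow 1$ is equivalent, via the explicit decay rates of spherical matrix coefficients, to producing a uniform exponential bound of the form $|\langle \pi(a_t) u, v\rangle| \leq C\, e^{-\eta t} \|u\| \|v\|$ for $K$-finite $u,v$ of mean zero, where $a_t = \mathrm{diag}(e^{t/2}, e^{-t/2})$. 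Thus the spectral gap question reduces to exponential mixing of the Teichm\"uller geodesic flow for sufficiently regular observables, and then a standard $K$-averaging bootstrap upgrades this to the $L^2$ statement.

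The technical heart of the argument—and what I expect to be the main obstacle if one tried to reprove it here—is establishing the exponential mixing estimate itself. Avila and Gou\"ezel accomplish this by building a symbolic model of the Teichm\"uller flow via Rauzy--Veech induction, exhibiting a suspension flow over a Markov map with exponentially decaying return-time tails, and then invoking the spectral theory of the associated transfer operator to obtain exponential decay of correlations for H\"older observables. These ingredients are already fully established in their paper, so in our write-up the proof amounts to citing Theorem 2.3 of \cite{Avila-Gouezel-13-small-eigenvalues} and invoking Eskin--Mirzakhani--Mohammadi to justify its applicability; the combinatorial and analytic heavy lifting is imported wholesale.
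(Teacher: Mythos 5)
Your overall strategy---cite Theorem 2.3 of \cite{Avila-Gouezel-13-small-eigenvalues} and justify its applicability via the measure classification---is the same as the paper's, and your background sketch of the mechanism (matrix coefficients, exponential mixing, the Rauzy--Veech symbolic model) is accurate but not needed. However, you have missed the one point in the reduction that actually requires an argument, and it is the only substantive content of the paper's proof of this theorem. The stratum $\Hcal^1(\alpha)$ in the statement allows entries $\alpha_i = 0$, i.e.\ \emph{fake zeros} (marked points); indeed the entire construction of the bundle $Y$ hinges on marking the surface at a point $x$ that is generically not a zero of $\omega$, so that $\alpha = (\alpha_1,\dots,\alpha_{d-1},0)$. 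Avila and Gou\"ezel prove their theorem only for strata of abelian differentials without marked points, so Theorem 2.3 of \cite{Avila-Gouezel-13-small-eigenvalues} does not literally apply to the spaces appearing here, and your claim that ``the only thing that needs to be said is why their hypotheses apply'' followed by a discussion of the measure class alone leaves this unaddressed.

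The paper closes this gap by observing that the Avila--Gou\"ezel argument goes through for strata with fake zeros essentially verbatim, and it isolates the one input that must be re-examined: Lemma 7.5 of \cite{Eskin-Masur-01-asympt-formulas}, which gives exponential decay of averages of the Margulis function over admissible subcomplexes. To extend that lemma one must redefine a ``complex'' in \S 6 of \cite{Eskin-Masur-01-asympt-formulas} so that its boundary may include saddle connections with endpoints at the fake zeros; the subsequent proofs then carry over mutatis mutandis. A smaller discrepancy: the paper attributes the fact that every ergodic $SL_2(\Rbb)$-invariant measure is admissible (in the sense required by \cite{Avila-Gouezel-13-small-eigenvalues}) to Eskin--Mirzakhani \cite{EM}, whereas you invoke the Eskin--Mirzakhani--Mohammadi orbit-closure results; for the purpose of this theorem the measure classification of \cite{EM} is what is needed. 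Your proposal should be amended to state the marked-point issue explicitly and indicate how the Eskin--Masur input is modified; without that, the citation does not cover the spaces the paper actually uses.
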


Several remarks should be made. 

First, Theorem \ref{thm:AG} was first proven in \cite{AGY-06-exp-mix} in the special case 
that $\mu$ lies in the natural Lebesgue measure class of a given connected 
component of a stratum. 

Second, the statement of Theorem 2.3 in \cite{Avila-Gouezel-13-small-eigenvalues} concerns so-called `admissible 
measures' $\tilde{\mu}$ on the orbifold universal cover of $\Hcal^1(\alpha)$. 
But as the authors point out just after the definition of admissible measure, 
a result of Eskin and Mirzakhani \cite{EM} shows that each ergodic $SL_2(\Rbb)$-invariant 
measure is admissible. 

Third, the conclusion of Theorem 2.3 \cite{Avila-Gouezel-13-small-eigenvalues}
is stronger: The set of $s \in (0,1)$ such that $\pi_s$ appears in $\pi$ 
is discrete and $0$ is the only possible accumulation point in $[0,1]$.
But we will only be concerned with the spectral gap.

Fourth, Avila and Gou\"ezel do not consider strata of 1-forms marked at fake zeros.  However, the proof applies with essentially no modifications. For example, an important input is Lemma 7.5  in \cite{Eskin-Masur-01-asympt-formulas}
which shows that the averages of the Margulis function associated to 
`admissible subcomplexes' (of fixed complexity) decay exponentially. To obtain this lemma 
in the case of fake zeros, one need only include fake zeros in the exposition 
of \S 6 of  \cite{Eskin-Masur-01-asympt-formulas}. In particular, a `complex' 
should now be defined to be a closed subset whose boundary consists of saddle connections  {\it including} 
the saddle connections that have endpoints at the fake zeros.  All of the subsequent proofs go through mutatis mutandis. 

\medskip

Recall the measure space $(Y, \nu)$ and the  
map $\Phi: Y \to \Hcal^1(\alpha)$ and described in \S \ref{sec:aff-manifold}. The following proposition explains why Theorem \ref{thm:AG} is applicable to our context.

\begin{prop}
Suppose that $\Gamma_\omega$ is a lattice in $SL_2(\Rbb)$. 
The measure $\Phi_*(\nu)$ is an ergodic $SL_2(\Rbb)$-invariant measure.
\end{prop}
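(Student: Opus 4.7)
The plan is to handle $G$-invariance and ergodicity of $\Phi_*(\nu)$ separately. Invariance is immediate from Lemma \ref{lem:equivariant-measures}: the map $\Phi$ intertwines the $G$-actions on $(Y,\nu)$ and $(\Phi(Y),\Phi_*\nu)$, and $\nu=q_*(m\times\mu)$ is $G$-invariant because $m$ is left Haar measure on $G=SL_2(\Rbb)$. The same lemma asserts that $\Phi$ is a measure-preserving isomorphism onto its image, so the task reduces to proving that the $G$-action on $(Y,\nu)$ is ergodic.

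For ergodicity of $(Y,\nu)$ I would appeal to the induced-representation description: since $Y=(G\times \dot X)/\Gamma$ is the $G$-space induced from the $\Gamma$-action $\gamma\cdot x=\sigma(\gamma)(x)$ on $(\dot X,\mu)$, the associated unitary $G$-representation on $L^2(Y,\nu)$ is canonically isomorphic to $\mathrm{Ind}_\Gamma^G L^2(\dot X,\mu)$. By Frobenius reciprocity, the $G$-invariant vectors in this induced representation correspond bijectively to the $\Gamma$-invariant vectors in $L^2(\dot X,\mu)$, so the $G$-action on $(Y,\nu)$ is ergodic iff the $\Gamma$-action on $(\dot X,\mu)$ is ergodic.

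The $\Gamma$-action on $(\dot X,\mu)$ is ergodic because it contains a single ergodic element. Indeed, since $\Gamma_\omega$ is a non-elementary lattice in $SL_2(\Rbb)$ it contains a hyperbolic element $\gamma_0$; by Lemma \ref{lem:Veech} the kernel of $D$ is finite, and any preimage $\phi_0\in \Aff^+_\omega$ with $D\phi_0=\gamma_0$ is a pseudo-Anosov diffeomorphism and therefore acts Bernoulli---in particular ergodically---on $(X,\mu)$ by \S 10.6 of \cite{FLP79}. Because $\Aff^+_{\omega,0}$ is the kernel of the permutation action of $\Aff^+_\omega$ on the finite set $\omega^{-1}(0)$, some positive power $\phi_0^n$ lies in $\Aff^+_{\omega,0}$; this power is again pseudo-Anosov and hence ergodic on $(X,\mu)$. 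Any $\Gamma$-invariant measurable subset of $\dot X$ is in particular $\phi_0^n$-invariant and so null or co-null.

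The main point that requires care is the first sentence of the second paragraph---the identification of $L^2(Y,\nu)$ with $\mathrm{Ind}_\Gamma^G L^2(\dot X,\mu)$ and the attendant Frobenius correspondence on $G$-invariant vectors. This is standard but, at the measure-theoretic level, amounts to checking that a $G$-invariant $L^2$ function $F$ on $Y$ is determined by its restriction to the fiber $\dot X$ over the identity coset in $G/\Gamma$, and that this restriction is $\Gamma$-invariant. The only nontrivial point is handling the ``$G$-invariance holds only a.e.\ for each $g$'' nuisance, which one dispatches in the usual way by working with a countable dense subgroup of $G$ and exploiting continuity of the regular representation. Everything else in the argument uses only inputs already recorded in the excerpt.
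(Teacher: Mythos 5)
Your proof is correct, but it takes a genuinely different route from the paper's. The paper proves the proposition entirely on the moduli-space side: it uses Lemma \ref{lem:closed-invariant} to see that $\Phi(Y)$ is a closed, connected, $G$-invariant set, invokes Theorem 2.2 of \cite{EMM} to conclude that $\Phi(Y)$ is an affine invariant manifold, checks that $\Phi_*(\nu)$ lies in the Lebesgue measure class of $\Phi(Y)$, and then uses the Eskin--Mirzakhani classification (Theorem 1.4 of \cite{EM} together with Proposition 2.16 of \cite{EMM}) to rule out any ergodic component other than the affine measure of $\Phi(Y)$ itself. You instead work on the bundle side: invariance from Lemma \ref{lem:equivariant-measures}, then the standard Mackey/Zimmer fact that the induced $G$-action on $(G\times \dot X)/\Gamma$ is ergodic iff the $\Gamma$-action on $(\dot X,\mu)$ is, and finally ergodicity of $\Gamma$ from a single affine pseudo-Anosov element (a power of a lift of a hyperbolic $\gamma_0\in\Gamma_\omega$ landing in $\Aff^+_{\omega,0}$), which is Bernoulli by \S 10.6 of \cite{FLP79} --- exactly the fact the paper itself records in the introduction. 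Your argument is essentially elementary and self-contained, and it suffices for the downstream application, since Theorem \ref{thm:AG} only requires an ergodic $SL_2(\Rbb)$-invariant measure (admissibility being supplied by \cite{EM} in either case). What the paper's heavier route buys is the additional structural information that $\Phi(Y)$ is an affine invariant manifold and that $\Phi_*(\nu)$ is its affine (Lebesgue-class) measure, which situates the example within the Eskin--Mirzakhani--Mohammadi framework; your route does not yield that identification. The one step you rightly flag --- the measure-theoretic Frobenius correspondence between $G$-invariant vectors in $L^2(Y,\nu)$ and $\Gamma$-invariant vectors in $L^2(\dot X,\mu)$, with its ``invariance only a.e.\ for each $g$'' nuisance --- is standard and is handled as you indicate, so I see no gap.
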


\begin{proof}
This is an immediate consequence of the results of \cite{EM} and \cite{EMM}.
By Lemma \ref{lem:closed-invariant} above, the set $\Phi(Y)$ is 
closed and $G$-invariant. Because $Y$ is connected, so is $\Phi(Y)$, and hence 
Theorem 2.2 in \cite{EMM} implies that $\Phi(Y)$ is an `affine invariant 
manifold' in the sense of \cite{EMM}.

We claim that $\Phi_*(\nu)$ lies in the Lebesgue measure class of the affine invariant manifold $\Phi(Y)$.
Indeed, recall that $\nu =\pi_*(m \times \mu)$ 
where $m$ is Haar measure, $\mu$ is the measure associated to
the nondegenerate 2-form $(i/2) \omega \wedge \oomega$, and $\pi$ is a covering map. 
It follows that $\nu$ comes from a volume form on $Y$. 
The map $\Phi$ is a diffeomorphism onto $\Phi(Y)$, and hence $\Phi_*(\nu)$ restricted to any coordinate chart lies in the Lebesgue measure class. 

By Lemma \ref{lem:equivariant-measures}, the measure $\Phi_*(\nu)$ is $G$-invariant, and hence 
it has a decomposition into ergodic measures. 
To finish the argument, we show that the only possible component
is the affine measure associated to the affine invariant manifold $\Phi(Y)$.
Theorem 1.4 in \cite{EM} implies that each ergodic component of $\Phi_*(\nu)$ is an affine measure 
supported on some affine invariant submanifold of $\Phi(Y)$. By Proposition 2.16 in \cite{EMM},
there are at most countably many such measures in a given stratum. Moreover, if the support of the affine measure is 
properly contained in $\Phi(Y)$, then it is singular with respect to Lebesgue measure. 
Hence, since $\Phi_*(\nu)$ lies in the Lebesgue measure class, the only ergodic component 
is the affine measure for $\Phi(Y)$. 
\end{proof}

%%%%%%%%%%%%%%%%%%%%%%%%%

\ \\

\section{Shrinking targets and Diophantine approximation}\label{sec:shrinking}

\medskip

Let $\omega$ be a holomorphic 1-form on $X$. From \S \ref{sec:aff-manifold}, 
recall the bundle $Y= (SL_2(\Rbb) \times \dot{X})/\Gamma$ and the measure $\nu$ on $Y$. 
To apply the results of \S  \ref{sec:duality}, we need to 
estimate the  ergodic error function $\epsilon(t)$ that 
was defined in equation (\ref{eqn:mean-ergodic}). In the present section, we use the 
spectral gap in Theorem \ref{thm:AG} and the quantitative mean ergodic theorem
to obtain a decay estimate for $\epsilon(t)$, and then we consider the consequences
of this estimate for the shrinking target problem.

We first choose a function $\rho$ that measures the `size' of elements in 
$SL_2(\Rbb)$:  Define $\rho: SL_2(\Rbb) \to \Rbb$ by $\rho(g)= \frac12 \log( \tr(g^t\cdot g))$.
Using, for example, Von Neumann's trace inequality, one can show that 
$\rho(g \cdot h) \leq \rho(g) + \rho(h)$ for each $g,h \in SL_2(\Rbb)$.
Also note that for $g \in SL_2(\Rbb)$ we have $\tr(g^{-1})= \tr(g)$.
Hence the results 
of \S \ref{sec:duality} apply with this $\rho$. For each 
$t \in \Rbb^+$ we define $G_t:= \rho^{-1}[0,t]$.

For the convenience of the reader, we recall that $\epsilon(t)$
is the supremum of the $L^2(Y, \nu)$-norm of 
\begin{equation}
\label{eqn:mean-ergodic2}
\frac{1}{m(G_t)} \int_{G_t}
u(g^{-1}\cdot y)\, dm(g)~
-~
\frac{1}{\nu(Y)}\int_{Y} u\, d \nu
\end{equation}
taken over all unit vectors $u \in L^2(Y, \nu)$.
Let $\pi$ be the representation 
of $SL_2(\Rbb)$ on $L^2(Y, \nu)$ defined by 
$$
(\pi(g) \cdot u)(y)~
=~
u(g^{-1} \cdot y).
$$
Let $\sigma_{\omega}$ be the supremum of ${\rm Re}(s)$ such that $\pi_s$
appears in the decomposition of $\pi$.

\begin{thm}
\label{thm:MET-est}
Suppose that $\Gamma_\omega$ is a lattice in $SL_2(\Rbb)$. 
There exists a constant $C$ such that for each $t \in \Rbb$
$$
\epsilon(t)~
\leq~
C\,
t
\cdot 
e^{(\sigma_\omega -1)t}.
$$
\end{thm}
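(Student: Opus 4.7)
The plan is to reduce Theorem \ref{thm:MET-est} to a norm bound for $A_t$ on the mean-zero subspace, transfer the spectral gap from Theorem \ref{thm:AG} across $\Phi$, and then invoke the quantitative mean ergodic theorem in Appendix \ref{sec:MET}.

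First, I would split any $u\in L^2(Y,\nu)$ as $u = \ou\cdot \mathbf{1} + u_0$ with $u_0 \in L^2_0(Y,\nu)$, the mean-zero subspace. Because $A_t$ fixes constants, the difference in (\ref{eqn:mean-ergodic2}) equals $A_t u_0$, and $\|u_0\| \leq \|u\|$. Thus it suffices to prove
\[
\|A_t u_0\|~\leq~ C\, t \cdot e^{(\sigma_\omega - 1)t}\, \|u_0\|
\]
for every $u_0 \in L^2_0(Y,\nu)$. Since $\pi_\omega$ preserves the splitting, this is a statement about the restriction of $\pi_\omega$ to $L^2_0(Y,\nu)$.

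Second, I would import the spectral gap. By Lemma \ref{lem:equivariant-measures}, $\Phi$ conjugates $\pi_\omega$ to the $G$-representation on $L^2(\Phi(Y), \Phi_*\nu)$, and the preceding proposition shows that $\Phi_*\nu$ is an ergodic $G$-invariant measure on $\Hcal^1(\alpha)$. Theorem \ref{thm:AG} therefore guarantees that the supremum $\sigma_\omega$ of spherical parameters $s \in [0,1)$ occurring in $\pi_\omega|_{L^2_0}$ is strictly less than $1$; in particular, every irreducible subrepresentation of $\pi_\omega|_{L^2_0}$ is either a principal series or a complementary series $\pi_s$ with $s \leq \sigma_\omega$.

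Third, I would apply the quantitative mean ergodic theorem of Appendix \ref{sec:MET} to the family $\{G_t\}$. Using the $KAK$ decomposition $g = k_1\, a(r)\, k_2$ with $a(r) = \mathrm{diag}(e^r, e^{-r})$, one computes $\tr(g^t g) = 2\cosh(2r)$, so $G_t = K \cdot \{a(r) : 0 \leq r \leq t + O(1)\}\cdot K$ and Haar measure restricted to $G_t$ has total mass of order $e^{2t}$, with most mass concentrated on the boundary shell $r \approx t$. For a spherical irreducible subrepresentation with parameter $s$, Harish-Chandra-type estimates bound the associated matrix coefficient on $a(r)$ by a constant times $r\cdot e^{(s-1)r}$; integrating against $\sinh(2r)\, dr$ on $[0,t]$ and normalizing by $m(G_t)$ produces the required $C\, t\cdot e^{(\sigma_\omega-1)t}$ bound. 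The appendix's quantitative mean ergodic theorem packages exactly this computation into an operator norm bound on $A_t$ given a spectral gap.

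The main obstacle is matching the sets $G_t$ (defined via $\rho$) with the radial averages implicit in the appendix's theorem, and then tracking the linear prefactor $t$ that arises from the logarithmic Harish-Chandra correction for the spherical series on $SL_2(\Rbb)$. Once $\sigma_\omega < 1$ is in hand, however, everything else reduces to this well-documented matrix coefficient estimate.
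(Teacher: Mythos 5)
Your proposal is correct and follows essentially the same route as the paper: identify the sublevel sets $G_t$ of $\rho$ with the radial sets $B_r$ via $t=\tfrac12\log(r^2+r^{-2})$ (so $\log r\leq t$ and $r^{\sigma_\omega-1}\leq C e^{(\sigma_\omega-1)t}$) and then quote the quantitative mean ergodic theorem of Appendix \ref{sec:MET} as a black box. The extra material you include on Harish--Chandra matrix coefficient estimates is just an unpacking of that cited appendix theorem, and the spectral gap transfer via $\Phi$ is handled in the paper one section earlier; neither changes the substance of the argument.
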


\begin{proof}
It suffices to apply the quantitative mean ergodic theorem for $SL_2(\Rbb)$
that is described in the Appendix \ref{sec:MET}. 
The set $B_r$ that is defined in (\ref{eqn:B_r}) equals the set $G_t$ iff 
$t = \frac12 \log(r^2 + r^{-2})$. Indeed, using the cyclicity of the trace we have 
$$
\rho(k_1\, g_a\, k_2)~
=~
\frac12\,
\log
\tr 
\left(
k_2^t\, g_a^t\, k_1^t\, k_1\, g_a\, k_2
\right)~
=~
\frac12\, \log \tr (g_a^2)~
=~ 
\frac12\, \log( a^2 + a^{-2}).
$$
Note that  $\log(r) \leq \frac{1}{2} \log(r^2 +r^{-2})$ and hence the claim follows from 
Theorem \ref{eqn:mean-ergodic-state}.
\end{proof}

By combining Proposition \ref{prop:abstract-target} with Theorem \ref{thm:MET-est}, we obtain

\begin{coro}
\label{coro:applied-target}
Suppose that $\Gamma_\omega$ is a lattice in $SL_2(\Rbb)$. 
Let $T_n \subset X$ be a sequence of measurable subsets and let $t_n$ 
be a sequence of real numbers. If 
$$
\sum_{n \geq 1} \frac{t_n^2  \cdot e^{2(\sigma_\omega-1)t_n} }{\mu(T_n)}
$$
is finite,
then there exists $C$ so that for almost every $x \in X$ there exists $n_x$ 
such that if $n\geq n_x$, then 
there exists $\phi \in \Aff_\omega^+$ so that 
\begin{enumerate}

\item  $\phi (x) \in  T_n$, and 

\item   $\rho(D\phi)\,  \leq \, t_n + C$.
\end{enumerate}
\end{coro}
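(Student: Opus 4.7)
The plan is essentially to insert the estimate of Theorem \ref{thm:MET-est} into the abstract shrinking target statement of Proposition \ref{prop:abstract-target} and then translate the conclusion, which is phrased in terms of the lattice $\Gamma$, back to the affine group $\Aff^+_\omega$ via the section $\sigma:\Gamma \to \Aff^+_{\omega,0}$ from Corollary \ref{coro:inj}.

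First, I would fix an auxiliary compact set $S \subset G = SL_2(\Rbb)$ with $m(S)>0$ whose $\Gamma$-translates are pairwise disjoint; since $\Gamma$ is a discrete subgroup of $G$, such an $S$ exists (take, for example, a sufficiently small symmetric neighborhood of the identity). Once $S$ is fixed, both $\sup_{s \in S}\rho(s)$ and the constant $C_\rho$ associated with the quasi-morphism property of $\rho$ are finite, so Proposition \ref{prop:abstract-target} applies in this setting.

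Next, Theorem \ref{thm:MET-est} provides a constant $C_0$ with $\epsilon(t)^2 \leq C_0^2\, t^2\, e^{2(\sigma_\omega - 1)t}$ for all $t \geq 0$. In particular,
\begin{equation*}
\sum_{n \geq 1} \frac{\epsilon(t_n)^2}{\mu(T_n)}~ \leq~ C_0^2 \sum_{n \geq 1} \frac{t_n^2 \, e^{2(\sigma_\omega-1)t_n}}{\mu(T_n)},
\end{equation*}
which is finite by hypothesis. Hence Proposition \ref{prop:abstract-target} yields, for $\mu$-almost every $x \in \dot{X}$, an integer $n_x$ such that for every $n \geq n_x$ there exists $\gamma_n \in \Gamma$ satisfying $\gamma_n \cdot x \in T_n$ and $\rho(\gamma_n) \leq t_n + 2\sup_{s\in S}\rho(s) + 4C_\rho$.

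Finally, I would translate this statement back to $\Aff^+_\omega$. Recall from \S\ref{sec:aff-manifold} that $\Gamma$ acts on $\dot X$ through the section $\sigma$, namely $\gamma \cdot x = \sigma(\gamma)(x)$, and by construction $D\sigma(\gamma) = \gamma$. Setting $\phi_n := \sigma(\gamma_n) \in \Aff^+_{\omega,0} \subset \Aff^+_\omega$ gives $\phi_n(x) = \gamma_n \cdot x \in T_n$ and $\rho(D\phi_n) = \rho(\gamma_n) \leq t_n + C$, where $C := 2\sup_{s\in S}\rho(s) + 4C_\rho$ depends only on $S$ and $\rho$. Since $\omega^{-1}(0)$ has $\mu$-measure zero, the almost-everywhere statement on $\dot X$ is equivalent to the corresponding statement on $X$, yielding the corollary.

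The only mildly non-routine point is the choice of $S$ with disjoint $\Gamma$-translates and the verification that the resulting constants depend only on data independent of $n$; no serious obstacle arises, since all the hard work has already been done in Proposition \ref{prop:abstract-target} and Theorem \ref{thm:MET-est}.
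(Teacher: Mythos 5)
Your proposal is correct and follows exactly the route the paper takes: the paper's proof of this corollary is the single line ``combine Proposition \ref{prop:abstract-target} with Theorem \ref{thm:MET-est},'' and you have simply filled in the routine details (choice of the auxiliary set $S$, verification of the summability hypothesis via $\epsilon(t_n)^2\leq C_0^2 t_n^2 e^{2(\sigma_\omega-1)t_n}$, and translation from $\Gamma$ back to $\Aff^+_\omega$ through the section $\sigma$). No gaps.
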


\begin{eg}
\label{eg:applied-target}
\
Suppose $t_n=n$ and $\mu(T_n) \geq \delta \cdot 
n^4 \cdot e^{2(\sigma_\omega-1)n}$ for some $\delta >0$.
Then Corollary \ref{coro:applied-target} implies that
there exists $C_\delta$ so that for a.e. $x \in X$ and all sufficiently 
large $n$ there exists $\phi\in \Aff_\omega^+$ 
with $\rho(D \phi) \leq n+ C_\delta$ so that $\phi(x) \in T_n$.
\end{eg}

Define $\|g\|:= \sqrt{\tr(g^tg)}$ and note that $\rho(g)= \log(\|g\|)$.
Let $d_\omega$ denote the distance function on $X$
associated to the arclength defined by integrating 
$|\omega|$. Recall that $\beta_{\omega}$ is the infimum of the numbers $1-s$ such that the complementary series $\pi_s$ is a subrepresentations of $\pi_{\omega}$ and so $\beta_{\omega} = 1 - \sigma_{\omega}.$

\begin{restatable}{thm}{thmA}[compare Theorem \ref{thm:full-measure} and Theorem \ref{thm:alpha-beta}]
\label{thm:diophantine}
Suppose that $D\Aff^+_\omega$ is a lattice in $SL_2(\Rbb)$. For each $y \in X$ and for a.e. $x \in X$, the set 
$$
H_{x,y,\alpha}~
:=~
\left\{
\phi \in \Aff_{\omega}^+~
:~ 
d_{\omega}(\phi (x), y)~
\leq~
\| D \phi \|^{-\alpha} \right\}
$$
is finite when $\alpha > 1$ and is infinite when $\alpha < \beta_{\omega}.$ 
\end{restatable}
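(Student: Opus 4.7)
\textbf{Proof plan for Theorem \ref{thm:diophantine}.} The two assertions are of opposite character and I plan to prove them separately. Finiteness for $\alpha > 1$ I intend to obtain from the easy direction of Borel-Cantelli, while infiniteness for $\alpha < \beta_\omega$ I will deduce by applying Corollary \ref{coro:applied-target} to a carefully chosen sequence of shrinking targets.

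For the upper bound, I fix $y \in X$ and for each $\phi \in \Aff^+_\omega$ set $E_\phi := \{x : d_\omega(\phi(x), y) \leq \|D\phi\|^{-\alpha}\}$, which equals $\phi^{-1}(B(y, \|D\phi\|^{-\alpha}))$. Since $\phi$ preserves $\mu$, and since the translation metric is locally Euclidean off the finite set $\omega^{-1}(0)$, a constant $K$ depending only on $\omega$ bounds $\mu(B(y, r)) \leq K r^2$ for all small $r$, giving $\mu(E_\phi) \leq K \|D\phi\|^{-2\alpha}$. Using that $\ker(D)$ is finite (Lemma \ref{lem:Veech}) and that $\Gamma_\omega$ is a lattice in $SL_2(\Rbb)$, the standard counting estimate $\#\{\phi \in \Aff^+_\omega : \|D\phi\| \leq R\} \lesssim R^2$ holds, and partitioning $\Aff^+_\omega$ into dyadic shells $\|D\phi\| \in [2^k, 2^{k+1})$ yields
$$
\sum_{\phi \in \Aff^+_\omega} \mu(E_\phi) \; \lesssim \; \sum_{k=0}^\infty 2^{2k} \cdot 2^{-2\alpha k} \; = \; \sum_{k=0}^\infty 2^{(2 - 2\alpha)k} \; < \; \infty
$$
whenever $\alpha > 1$. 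The easy Borel-Cantelli lemma then forces $H_{x,y,\alpha}$ to be finite for $\mu$-a.e.\ $x$.

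For the lower bound, I fix $y$ and select an auxiliary parameter $\alpha' \in (\alpha, \beta_\omega)$. Setting $t_n := n$ and $T_n := B(y, e^{-\alpha' n})$, flatness of the metric away from $\omega^{-1}(0)$ provides $\mu(T_n) \geq c\, e^{-2\alpha' n}$ for all sufficiently large $n$, with $c > 0$ depending only on $\omega$ and $y$. Using $\beta_\omega = 1 - \sigma_\omega$, the summability hypothesis of Corollary \ref{coro:applied-target} becomes
$$
\sum_n \frac{n^2\, e^{2(\sigma_\omega - 1)n}}{\mu(T_n)} \; \leq \; \frac{1}{c} \sum_n n^2\, e^{2(\alpha' - \beta_\omega)n} \; < \; \infty,
$$
which converges because $\alpha' < \beta_\omega$. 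The corollary produces a constant $C$ and, for $\mu$-a.e.\ $x$, an index $n_x$ such that for each $n \geq n_x$ there exists $\phi_n \in \Aff^+_\omega$ with $\phi_n(x) \in T_n$ and $\rho(D\phi_n) \leq n + C$. Once $n$ is large enough that $(\alpha' - \alpha)n \geq \alpha C$, one has $\|D\phi_n\|^{-\alpha} = e^{-\alpha \rho(D\phi_n)} \geq e^{-\alpha(n+C)} \geq e^{-\alpha' n}$, so $d_\omega(\phi_n(x), y) < e^{-\alpha' n} \leq \|D\phi_n\|^{-\alpha}$ and therefore $\phi_n \in H_{x,y,\alpha}$.

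Finally, to convert the existence of a $\phi_n$ at each scale into an infinite subset of $H_{x,y,\alpha}$, I discard the countable null set $\Aff^+_\omega \cdot y \subset X$. On the remaining full-measure set, $\phi(x) \neq y$ for every $\phi \in \Aff^+_\omega$, so for each fixed $\phi$ the number $d_\omega(\phi(x), y)$ is positive and eventually exceeds $e^{-\alpha' n}$, forcing the sequence $(\phi_n)$ to take infinitely many distinct values. The main subtlety I anticipate is the coordination of constants: the constant $C$ in Corollary \ref{coro:applied-target} is produced only \emph{after} the sequence $(T_n)$ is committed to, and the intermediate parameter $\alpha'$ is precisely what provides the slack needed to absorb $C$ once $n$ is sufficiently large.
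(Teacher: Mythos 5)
Your proposal is correct and follows essentially the same route as the paper: the easy Borel--Cantelli direction combined with quadratic lattice-point counting for finiteness when $\alpha>1$, and Corollary \ref{coro:applied-target} applied to shrinking balls around $y$ (together with a discard of the null orbit of $y$ to force infinitely many distinct $\phi_n$) for infiniteness when $\alpha<\beta_\omega$. The only cosmetic differences are your use of an intermediate exponent $\alpha'$ to absorb the constant $C$, where the paper instead takes targets of radius $n^2 e^{(\sigma_\omega-1)n}$ as in Example \ref{eg:applied-target}, and a slightly streamlined distinctness argument.
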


Note that Theorem \ref{thm:full-measure} together with Theorem 
\ref{thm:alpha-beta} is weaker than Theorem \ref{thm:diophantine}
in the sense that the conclusion of Theorem \ref{thm:full-measure} only 
holds for a.e. $(x,y) \in X \times X$.

\begin{proof}
To prove that $H_{x,y}$ is finite when $\alpha>1$, we will apply 
the `easy' direction of the Borel-Cantelli lemma.
Let $T_\phi$ be the ball of radius $\|D\phi\|^{-\alpha}$ centered
at $y$, and define $E_\phi= \{x \in X: \phi( x) \in T_\phi\}=
\phi^{-1} \cdot T_\phi$.
Because $\phi^{-1}$ is measure preserving, there exists a positive integer $M$ such that we have
$
  \mu \left( E_\phi \right)
  =
  \mu \left(  T_\phi \right)
  =
  M \pi\, \|D \phi\|^{-2 \alpha}.
$
Note that the positive integer $M$ allows for $y \in \omega^{-1}(0).$ Therefore if we can prove that 
\begin{equation}
\label{eqn:easy-BC}
\sum_{\phi \in \Aff^+_\omega} \|D \phi\|^{-2 \alpha}~
<~
\infty
\end{equation}
then by the Borel-Cantelli Lemma 
we have 
$$
\mu 
\left( 
\bigcap_{n=1}^\infty \bigcup_{\rho(\gamma) \leq n} E_\phi
\right)~
=~
0,
$$
and so the set of $x$ such that 
infinitely many $E_\phi$ are nonempty has measure equal to  zero.

We next reduce (\ref{eqn:easy-BC}) to the statement that 
$\sum_{\gamma \in \Gamma} \|\gamma \|^{-2 \alpha} < \infty$  
where $\Gamma$ is the group defined in \S $\ref{sec:aff-manifold}$.
Recall that $\Gamma$ is the image under $D$ of the subgroup 
$\Aff_{\omega,0}^+$ that consists of $\phi$ that 
act like the identity on $\omega^{-1}(0)$.
As discussed there, the group $\Aff_{\omega,0}^+$ 
has finite index in $\Aff_{\omega}^+$, and hence there
exist $\phi_1, \ldots, \phi_k$ so that 
$\Aff^+_{\omega} = \bigcup_{i=1}^k \phi_i \cdot \Aff_{\omega,0}^+$.
Because $\|\cdot \|$ is a norm, we have 
$\|D\phi_i\|\leq \|D\phi_i^{-1}\|\cdot \|D\phi_i \cdot D\phi\|$
for each $i$ and each $\phi \in \Aff^+_{\omega}$.
Therefore, 
\begin{eqnarray*} 
\sum_{\phi \in \Aff^+_{\omega}} \|D\phi\|^{-2\alpha}
&=&
\sum_{i=1}^k~
\sum_{\phi \in \Aff^+_{\omega,0}} \|D(\phi_i \phi)\|^{-2\alpha}\\
&\leq&
\sum_{i=1}^k \|D\phi_i^{-1}\|^{2\alpha}~
\sum_{\phi \in \Aff^+_{\omega,0}} \|D \phi \|^{-2\alpha}
\\
&=&
\sum_{i=1}^k \|D\phi_i^{-1}\|^{2\alpha}~
\sum_{\gamma \in \Gamma} \|\gamma\|^{-2\alpha}
\end{eqnarray*}
where the last line follows from the fact that $D$ 
restricted to $\Aff^+_{\omega,0}$ is a bijection.

We make three observations. Let $\hat{\Gamma}_n := \Gamma \cap G_n\setminus G_{n-1}.$ First, we have the following effective lattice point counting asymptotics: there exists an $\eta >0$ such that for any $\eta > \varepsilon > 0$, $$\vert \hat{\Gamma}_n \vert = m_G(G_n\setminus G_{n-1}) (1 + O(e^{-(\eta - \varepsilon)n})).$$ See, for example, \cite{Gorodnik-Nevo-10-lattice} and references therein. Second, we can bound the size of $\| \gamma \|$ for $\gamma \in \hat{\Gamma}_n$. Indeed, for any such $\gamma$, 
\begin{align*}
n-1 &< \log \| \gamma \| \leq n \\
e^{n-1} &< \| \gamma \| \leq e^n.
\end{align*}
Third, by integrating the characteristic function of $G_t$ over $SL_2(\mathbb{R})$ with respect to the Haar measure $m$, for some constant $C_1$, $m(G_t) = C_1(e^{2t} - 1)$.

Applying these observations to the sum, and absorbing constants as needed into $C_2$, we observe that the sum converges when $\alpha > 1:$

\begin{align*}
\sum_{n=1}^{\infty} \sum_{\gamma \in \hat{\Gamma}_n} \| \gamma \|^{-2\alpha} & \leq C_2 \sum_{n=1}^{\infty} m(G_n \setminus G_{n-1}) e^{-2\alpha(n-1)} \\
&\leq C_2 \sum_{n=1}^{\infty} (e^{2n}-e^{2(n-1)})e^{-2\alpha(n-1)} \\
&= C_2 \sum_{n=1}^{\infty} e^{2(n-1)}(e^2 - 1) e^{-2\alpha(n-1)} \\
&= C_2 \sum_{n=0}^{\infty} e^{2n} e^{-2\alpha n}.
\end{align*}

Finally, we show that if $\alpha < \beta_{\omega} = 1 - \sigma_\omega$,
then the set $H_{x,y}$ is infinite for a.e. $x$. 
Let $t_n = n$ and let $T_n$ be the disc of radius 
$n^2 \cdot e^{(\sigma_\omega-1)n}$ centered at $y$. Then we are in  the situation of 
Example \ref{eg:applied-target}. Thus, there exists $C$ so that 
for a.e. $x$ there exists $n_x$ such that if $n \geq n_x$,
then there exists $\phi_n$ with $d_\omega(\phi_n(x),y) < n^2 \cdot (e^{n})^{\sigma_\omega-1}$ 
and $\log(\|D\phi_n\|) \leq n +C$.  The latter implies that 
$\|D\phi_n \|^{-\alpha}\geq (e^{n})^{-\alpha} \cdot e^{-C \alpha}$. 
Since $\alpha < 1 - \sigma_\omega$, for sufficiently large $n$ we have 
$n^2 e^{-(1-\sigma_\omega - \alpha)n} \leq e^{-C\alpha}$ and consequently $n^2 e^{-(1-\sigma_\omega)n} \leq e^{-C\alpha}e^{-n\alpha}$.
By combining these estimates, we find that for sufficiently large $n$ there exists $\phi_n$ such that 
\begin{equation}
\label{eqn:dio-est}
   d_{\omega}(\phi_n (x), y)~
\leq~
\| D \phi_n \|^{-\alpha}.
\end{equation}
In particular, each $\phi_n$ lies in $H_{x,y}$.
To finish the proof we will show that infinitely many $\phi_n$ 
are distinct.

Note that the orbit $\Aff^+ \cdot x$ has measure zero, and 
hence we may suppose, without loss of generality, that $y \notin \Aff^+ \cdot x$. Thus, 
if the set $H_{x,y}$ were finite, then we would have $\inf\{ d_{\omega}(\phi_n(x),y): n \in \Zbb^+\} > 0$.
By Lemma \ref{lem:Veech}, the kernel of $D$ is a finite, and hence since $\Gamma_\omega$ is discrete,
given $M>0$, there exist only finitely many $\phi$ such that $\|D\phi \|\leq M$.
Therefore, the sequence $n \mapsto \|D \phi_n\|$ is unbounded. But since $\alpha>0$, this 
would contradict (\ref{eqn:dio-est}). 
\end{proof}

\appendix

%%%%%%%%%%%%%%%%%%%%%%%%%%%%%%%%%%%%%%%%%%%%%%%%%%%%%%%

\section{The quantitative mean ergodic theorem for $SL_2(\Rbb)$}
\label{sec:MET}

\medskip

Let $G:=SL_2(\Rbb)$ and let $K:=SO(2)$.
Let $(Y, \nu)$ be a Borel measure space with $\nu(Y) < \infty$. 
Suppose that  $G$ act on the left of 
$Y$ by measure preserving transformations.
Define a unitary representation $\pi$ of $G$ on $L^2(Y, d\nu)$
by setting
\begin{equation}
\label{eqn:left-regular}
(\pi(g) u)(y)~
:=~
u(g^{-1}\cdot y).
\end{equation}
Define
$$
g_a~ 
=~
\left(
\begin{array}{cc}
a & 0 \\
0 & a^{-1}
\end{array}
\right).
$$
Recall the polar decomposition: Each $g \in G$ can be written 
as $g= k_1\, g_a\, k_2$ where $a \geq 1$ and $k_1, k_2 \in K$.
For each $r \geq 1$, define 
\begin{equation}
\label{eqn:B_r}
B_r~ 
:=~
\left\{ 
k_1\, g_a\, k_2\, :\, k_1,k_2 \in K \mbox{ and }  1 \leq a \leq r
\right\}.
\end{equation}

Recall from \S \ref{sec:AG} the description of the irreducible spherical representations
$\pi_s$ of $G$.
The following is well-known. See, for example, \cite{Nevo} and \cite{Ananetal}.

\begin{thm}
\label{thm:MET}
Let $\sigma$ be the supremum of the ${\rm Re}(s)$ such that the 
nontrivial representation $\pi_s$ appears in the decomposition of $\pi$.
There exists $C>0$ so that  
\begin{equation*}
\label{eqn:mean-ergodic-state}
\left\| 
\frac{1}{m(B_r)} \int_{B_r} u(g^{-1} \cdot x)\,  dm(g)~
-~
\frac{1}{\nu(Y)}\int_{Y} u(y)\, d \nu(y)
\right\|~
\leq~
C\,
\|u\|\,
\log(r) 
\cdot 
r^{\sigma-1}.
\end{equation*}
where the supremum is taken over the $s$ 
such  that $\pi_s$ appears in the decomposition of $\pi$.
\end{thm}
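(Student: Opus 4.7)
The plan is to prove the estimate via the classical spectral/direct-integral approach, reducing the norm bound on the averaging operator to a uniform pointwise bound on elementary spherical functions. Write $A_r u := m(B_r)^{-1} \int_{B_r} \pi(g) u\, dm(g)$, so the asserted inequality reads $\|A_r u - \bar u \cdot \mathbf{1}\| \leq C \|u\| \log(r) \cdot r^{\sigma-1}$, where $\bar u = \nu(Y)^{-1}\int_Y u\, d\nu$.

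First, since $\pi(g) \mathbf{1} = \mathbf{1}$, the operator $A_r$ fixes $\mathbf{1}$ and preserves the closed invariant subspace $L^2_0 := \{f \in L^2(Y,\nu) : \int f\, d\nu = 0\}$. Decomposing $u = \bar u\cdot \mathbf{1} + u_0$ with $u_0 \in L^2_0$ reduces the claim to the operator norm bound $\|A_r|_{L^2_0}\| \leq C \log(r) \cdot r^{\sigma-1}$. Second, the set $B_r = K\{g_a : 1 \leq a \leq r\}K$ is bi-$K$-invariant, so by bi-invariance of Haar measure $\pi(k_1) A_r \pi(k_2) = A_r$ for all $k_1, k_2 \in K$. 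Equivalently, $A_r = P_K A_r P_K$ where $P_K := \int_K \pi(k)\, dk$ is the orthogonal projection onto the $K$-fixed subspace, so it suffices to bound $\|A_r\|$ on $L^2_0{}^K := P_K(L^2_0)$.

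Third, decompose $\pi|_{L^2_0}$ as a direct integral of irreducible unitary representations. Only the spherical $\pi_s$ carry nonzero $K$-fixed vectors, and by hypothesis each such $\pi_s$ satisfies $\Re(s) \leq \sigma$. In each spherical $\pi_s$ the $K$-fixed subspace is one-dimensional, spanned by a unit vector $v_s$, and $A_r$ acts on $v_s$ as multiplication by the scalar
\begin{equation*}
\lambda_s(r) \;=\; \frac{1}{m(B_r)} \int_{B_r} \varphi_s(g)\, dm(g), \qquad \varphi_s(g) := \langle \pi_s(g) v_s, v_s \rangle.
\end{equation*}
The classical Harish-Chandra estimate gives $|\varphi_s(g_a)| \leq C_0 (1 + \log a) \cdot a^{\Re(s)-1}$ for $a \geq 1$, uniformly in $\Re(s) \in [0,\sigma]$. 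Using the $KAK$-decomposition of Haar measure $dm \propto dk_1 \cdot (a - a^{-3})\, da \cdot dk_2$ and $m(B_r) \asymp r^2$, one computes $|\lambda_s(r)| \leq C(1 + \log r) \cdot r^{\sigma-1}$ uniformly in $s$. Reassembling via the direct integral and combining with $A_r = P_K A_r P_K$ yields the claim.

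The main obstacle is the uniform pointwise control of the spherical function across the one-parameter family $\{\pi_s : \Re(s) \leq \sigma\}$, together with the measure-theoretic passage from norm bounds on each irreducible fiber to a norm bound on the direct integral. For $SL_2(\Rbb)$ both are classical: the elementary spherical functions admit an explicit integral representation from which Harish-Chandra's bound follows by elementary estimation, and the direct-integral passage is standard via the spherical Plancherel decomposition, as developed in the cited references \cite{Nevo} and \cite{Ananetal}.
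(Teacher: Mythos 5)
Your argument is correct and is the standard proof of this result; the paper itself does not prove Theorem \ref{thm:MET} but cites it as well-known from \cite{Nevo} and \cite{Ananetal}, where exactly this spectral argument is carried out (reduction to the $K$-fixed subspace via $A_r = P_K A_r P_K$, direct-integral decomposition into spherical representations, the uniform Harish-Chandra-type bound $|\varphi_s(g_a)| \leq C(1+\log a)\,a^{\Re(s)-1}$, and integration over the $KAK$ decomposition of Haar measure with $m(B_r) \asymp r^2$). The only cosmetic point is that your intermediate bound $(1+\log r)\,r^{\sigma-1}$ is the honest form near $r=1$ (where the stated $\log(r)\cdot r^{\sigma-1}$ degenerates while $A_1 = P_K \neq$ projection onto constants in general); the theorem is of course intended for large $r$.
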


\ \medskip

%%%%%%%%%%%%%%%%%%%%%%%%%%%%%%%%%%%%%%%%%%%%%%%%%%%%%%%%%%%%

\end{document}